\documentclass[12pt]{amsart}
\setlength{\textwidth}{6.0in}
\setlength{\textheight}{8.6in}
\setlength{\oddsidemargin}{4mm}
\setlength{\evensidemargin}{4mm}
\setlength{\footskip}{12mm}
\usepackage{amsfonts}
\usepackage{amssymb}
\usepackage{color}
\usepackage{dsfont}
\usepackage{MnSymbol}
\usepackage{amsmath}
\usepackage{amsthm}
\usepackage{mathrsfs}

\numberwithin{equation}{section}
\pagestyle{plain}
\theoremstyle{plain}
 \newtheorem{theorem}{Theorem}[section]
 
 \newtheorem{corollary}[theorem]{Corollary}
 \newtheorem{proposition}[theorem]{Proposition}

\theoremstyle{definition}
 \newtheorem{definition}[theorem]{Definition}
 \newtheorem{example}[theorem]{Example}
 \newtheorem{remark}[theorem]{Remark}


\newcommand{\C}{\mathbb{C}}
\newcommand{\R}{\mathbb{R}}
\newcommand{\N}{\mathbb{N}}

%

\newcommand{\bN}{\mathbb{N}}

\newcommand{\bR}{\mathbb{R}}

\newcommand{\supp}{\mbox{\rm supp }}

\newcommand{\bE}{\mathbb{E}}




\setlength{\baselineskip}{22pt}
\setlength{\parindent}{1.8pc}
\allowdisplaybreaks

\begin{document}

\vspace{5mm}
\begin{center}
{\bf
{\large
Second order elliptic partial differential equations driven by L\'{e}vy white noise}}

\vspace{5mm}

David Berger and Farid Mohamed\\
\end{center}
\vspace{1cm}

This paper deals with linear stochastic partial differential equations with variable coefficients driven by L\'{e}vy white noise. We first derive an existence theorem for integral transforms of L\'{e}vy white noise and prove the existence of generalized and mild solutions of second order elliptic partial differential equations. Furthermore, we discuss the generalized electric Schr\"odinger operator for different potential functions $V$.

\section{Introduction}
Since the beginning of studying partial differential equations the Laplacian operator $\Delta:=\sum\limits_{j=1}^d \partial_{j}^2$ was of great interest in different mathematical theories and applications. For example, the solution of the Poisson equation
\begin{align*}
    -\Delta u=f
\end{align*}
for some function $f$ can be interpreted as a stationary solution of the heat equation and is therefore important in thermodynamics. In order to study different heterogeneity assumptions in the space, the divergence operator
\begin{align*}
    \textrm{div}(A(x)\nabla u):=\sum\limits_{i,j=1}^d \partial_i (a_{ij}(x) \partial_j u)
\end{align*}
was introduced, where the matrix function $A$ satisfies some ellipticity condition. This kind of operator is for example used in the Maxwell equations in general media (see [\ref{Putten}]).\\
The fundamental solution of the Laplace equation is well-known, but there is no explicit form for a fundamental solution of a general divergence form operator, but there exist upper and lower bounds, see for example [\ref{Littman}]. \\
The goal of this paper is to obtain generalized solutions of the equation
\begin{align*}
    p(x,D) s =  \dot{L},
\end{align*}
where $\dot{L}$ is a so-called generalized L\'{e}vy white noise and $p$ is a partial differential operator of the form 
\begin{align}
    -\textrm{div} (A(x)\nabla u)+b(x)\cdot \nabla u+V(x)u, \quad u\in C^{\infty}(\R^d)\label{pdov},
\end{align}
for a uniformly elliptic $\R^d$-valued matrix function $A$ and functions $b:\R^d\to\R^d$, $V:\R^d\to\R$. We especially achieve generalized and mild solutions for the generalized electric Schr\"odinger operator driven by a L\'evy white noise, i.e. we are looking for a solution $u$ of the stochastic partial differential equation 
\begin{align}
    -\textrm{div}(A(x)\nabla u) + V(x)u = \dot{L},\label{Schrod2}
\end{align}
where $A$ is a uniformly elliptic $d\times d$ matrix, the potential $V> 0$ belongs to the reverse H\"older class and $\dot{L}$ is a L\'evy white noise. Since the fundamental solution of the Schr\"odinger operator has exponential decay, we will derive weaker assumptions on the L\'evy white noise in comparison to the general case (\ref{pdov}) to show the existence of generalized and mild solutions. This can be seen as an extension of the theory founded in [\ref{Berger}] by D. Berger, but the results are not directly applicable. In order to overcome this shortcoming we derive existence results for generalized random processes constructed by integral transforms of the underlying L\'{e}vy white noise. Furthermore, we study different distributional properties of these solutions and show that we can construct periodically stationary generalized random processes.\\
We are solving the stochastic partial differential equations in distributional sense, i.e. a solution $s$ is a distribution valued random variable such that $\langle s, p(x,D)^*\varphi\rangle =\langle \dot{L},\varphi\rangle$ for every $\varphi$ in our function space. For a good introduction to distributional solutions of partial differential equations see for example [\ref{Hoermander}]. Until now there does not exist a good understanding of L\'{e}vy white noise driven stochastic partial differential equations under general moment conditions, but there exists literature for the case of Gaussian white noise and L\'{e}vy white noise with stricter moment conditions. In [\ref{Walsh}]  SPDEs driven by Gaussian white noise where studied. Moreover, a similar approach for L\'{e}vy white noise can be found in [\ref{Holden}] and [\ref{Lokka}]. In the case of stochastic partial differential equations with constant coefficients see also [\ref{Dalang}] and [\ref{Berger}]. Our method is inspired by the papers of [\ref{Fageot}] and the results of [\ref{Rajput}].

In Section 3 we provide the general framework needed to discuss stochastic partial differential equations driven by L\'evy white noise, whose solutions are defined as generalized random process. We introduce L\'evy white noise as a generalized random process in the sense of I.M. Gelfand and N.Y. Vilenkin (see [\ref{Gelfand}]). Theorem \ref{theo1} implies that a large class of linear stochastic partial differential equations driven by a L\'evy white noise has a generalized solution, where we used a more general kernel $G:\R^m\times \R^d\to\R$ compared to Theorem 3.4 of D. Berger in [\ref{Berger}]. Furthermore, we study the moment properties of generalized random processes $s$ driven by L\'evy white noise $\dot{L}$. For a well-defined random process $s(\varphi) = \langle \dot{L}, G(\varphi)\rangle$, $\varphi\in\mathcal{D}(\R^d)$ we show in Theorem \ref{momentprop} that if $\dot{L}$ has finite $\beta>0$ moment, then $s$ has also finite $\beta$-moment under further conditions on the kernel $G$. Moreover, we show that if $s$ has finite $\beta$-moment, then also $\dot{L}$ has finite $\beta$-moment. In Section 4 we discuss our first example, the partial differential operators of the form (\ref{pdov}) and give existence results for generalized solutions. Furthermore, we discuss periodically stationary solutions $s$ for this example. Afterwards we consider the generalized electric Schr\"odinger operator driven by L\'evy white noise and show under weaker conditions, as in the example above, the existence of generalized solutions. We also study the concept of mild solutions of (\ref{Schrod2}), i.e. a solution $u$ which is a random field and given by the convolution of the L\'evy white noise with the fundamental solution of (\ref{Schrod2}).
In Proposition \ref{mild} we mention when such a solution $u$ exists and is stochastically continuous.

\section{Notation and Preliminaries}
Let us recall a few key concepts and techniques which will be needed later on: Most of our notation is standard or self-explanatory; where $\lambda^d$ is the Lebesgue measure on $\R^d$.

\section{Integral transforms and generalized stochastic processes driven by Levy white noise}
\noindent We provide the general framework needed to discuss stochastic partial differential equations driven by L\'evy white noise and introduce L\'evy white noise as generalized random processes in the sense of I.M. Gelfand and N.Y. Vilenkin (see [\ref{Gelfand}]). In [\ref{Berger}] it was shown that a convolution operator, with certain properties regarding his integrability, defines a generalized random process, assuming low moment conditions on the L\'evy white noise. Similar to [\ref{Berger}], we will use the characterization of the extended domain (see [\ref{Fageot}, Proposition 3.7.]) and achieve new results for a more general kernel $G:\R^m\times \R^d \to\R$, which allows us in Section 4 to model different kinds of stationarity assumptions and also to obtain generalized solutions of L\'evy driven stochastic partial differential equations.\\
Let $(\Omega,\mathcal{F},\mathcal{P})$ be a probability space.

\begin{definition}(see [\ref{Fageot}, Definition 2.1.])
A \emph{generalized random process} is a linear and continuous function $s:\mathcal{D}(\R^d)\to L^0(\Omega)$. The linearity means that, for every $\varphi_1, \varphi_2 \in \mathcal{D}(\R^d)$ and $\mu\in \R$,
\begin{align*}
s(\varphi_1 + \mu\varphi_2) = s(\varphi_1)+ \mu s(\varphi_2) \textrm{ almost surely.}
\end{align*}
The continuity means that if $\varphi_n\to \varphi$ in $\mathcal{D}(\R^d)$, then $s(\varphi_n)$ converges to $s(\varphi)$ in probability.
\end{definition}
Due to the nuclear structure on $\mathcal{D}(\R^d)$ it follows with [\ref{Walsh}, Corollary 4.2] that a generalized random process has a version which is a measurable function from $(\Omega,\mathcal{F})$ to $(\mathcal{D}'(\R^d),\mathcal{C})$ with respect to the cylindrical $\sigma$-field $\mathcal{C}$ generated by the sets
\begin{align*}
    \{u\in\mathcal{D}'(\R^d)|\,(\langle  u,\varphi_1\rangle,\dotso,\langle u,\varphi_N\rangle)\in B\}
\end{align*}
with $N\in \N$, $\varphi_1,\dotso,\varphi_N \in \mathcal{D}(\R^d)$ and $B\in \mathcal{B}(\R^{N})$.
From now on it is always meant such a version.
\\
The probability law of a generalized random process $s$ is the probability measure on $\mathcal{D}'(\R^d)$ given by
\begin{align*}
\mathcal{P}_s(B):=\mathcal{P}(s\in B) = \mathcal{P}(\{\omega\in \Omega : s(\omega)\in B\})
\end{align*}
for $B\in\mathcal{C}$, where $\mathcal{C}$ is the cylindrical $\sigma$-field on $\mathcal{D}'(\R^d)$. \\
The characteristic functional of a generalized random process $s$ is the functional $\widehat{\mathcal{P}} : \mathcal{D}(\R^d) \to \C$ defined by
\begin{align*}
\widehat{\mathcal{P}}_s(\varphi)=\int\limits_{\mathcal{D}'(\R^d)}\exp(i\langle u,\varphi\rangle)d\mathcal{P}_s(u).
\end{align*}
The characteristic functional characterizes the law of $s$ in the sense that two random processes are equal in law if and only if they have the same characteristic functional.
Now we define the L\'{e}vy white noise, which is closely connected to a L\'{e}vy process. In general, a L\'{e}vy process is a stochastically continuous process with independent and stationary increments starting in $0$. A L\'{e}vy process $(L_t)_{t\ge 0}$ is characterized by its characteristic function, it holds that
\begin{align*}
    \bE e^{izL_t}=\exp(t\psi(z)),
\end{align*}
for every $z\in\R$ and $t\ge 0$. We call $\psi$ the L\'evy exponent which can be characterized by an $a\ge 0$, $\gamma\in \R$ and a L\'{e}vy measure $\nu$, i.e. a measure such that
\begin{align*}
    \nu(\{0\})=0\textrm{ and }\int\limits_{\R\setminus\{0\}}\min\{1,x^2\}\nu(dx)<\infty.
\end{align*}
For all $z\in \R$ it holds that
\begin{align*}
        \psi(z) = i\gamma z- \frac{1}{2} a z^2 + \int\limits_{\R}(e^{i x z}-1-i x z \mathds{1}_{|x|\le 1})\nu(dx).
    \end{align*}
\begin{definition}
A \emph{L\'{e}vy white noise} $\dot{L}$ on $\R^d$ is a generalized random process with characteristic functional of the form
\begin{align*}
\widehat{\mathcal{P}}_{\dot L}(\varphi)=\exp\left(\,\,\int\limits_{\R^d} \psi(\varphi(x))\lambda^d(dx)\right)
\end{align*}
for every $\varphi \in \mathcal{D}(\R^d)$, where $\psi:\R\to \C$ is a L\'evy exponent, i.e. there exist $a\in\R^+$, $\gamma\in\R$ and $\nu$ a L\'{e}vy-measure, such that   
    \begin{align*}
        \psi(z) = i\gamma z- \frac{1}{2} a z^2 + \int\limits_{\R}(e^{i x z}-1-i x z \mathds{1}_{|x|\le 1})\nu(dx).
    \end{align*}
The function $\psi$ is uniquely characterized by the triplet $(a,\gamma, \nu)$ known as the \emph{characteristic triplet}.
\end{definition}

The existence of the L\'evy white noise was shown in [\ref{Gelfand}]. Another possible way to construct L\'evy white noise would be as an independently scattered random measures, i.e. a random process whose test functions are indicator functions and are independently scattered when two indicator functions with disjoint supports define independent random variables (see B.S. Rajput and J. Rosinski [\ref{Rajput}]). In [\ref{Fageot}] J. Fageot and T. Humeau unified these two approaches by extending the L\'evy white noise, defined as generalized random processes, to independently scattered random measures. This connection led to results in [\ref{Fageot}], which made it possible to extend the domain of definition of L\'evy white noise to some Borel-measurable functions $f : \R^d\to \R$. We say that the function $f$ is in the domain of $\dot{L}$ if there exists a sequence of elementary functions $f_n$ converging almost everywhere to $f$ such that $\langle \dot{L}, f_n \mathds{1}_A\rangle$ converges in probability for $n\to \infty$ for every Borel set $A$ and set $\langle \dot{L},f \rangle $ as the limit in probability of $\langle \dot{L},f_n \rangle $ for $n\to\infty$, where $\langle \dot{L},f_n \rangle $ is defined by $\sum_{j=1}^m a_j \langle \dot{L},\mathds{1}_{A_j} \rangle $ for a elementary function $f_n:=\sum_{j=1}^m a_j \mathds{1}_{A_j}$, see also [\ref{Fageot}, Definition 3.6]. For the maximal domain of the L\'evy white noise $\dot{L}$ we write $D(\dot{L})$. By setting $L(A):=\langle \dot{L}, \mathds{1}_{A} \rangle $ for bounded Borel sets $A$, the extension of a L\'evy white noise $\dot{L}$ can be identified with a L\'evy basis $L$ in the sense of Rajput and Rosinski [\ref{Rajput}], see [\ref{Fageot}, Theorem 3.5 and Theorem 3.7]. As a L\'evy basis can be identified with a L\'evy white noise in a canonical way, i.e. $\langle \dot{L},\varphi\rangle := \int_{\R^d} \varphi(x) dL(x)$ for $\varphi\in \mathcal{D}(\R^d)$, we make no difference between a L\'evy white noise and a L\'evy basis. In particular, a Borel-measurable function $f : \R^d\to \R$ is in $D(\dot{L})$ if and only if $f$ is integrable with respect to the L\'evy basis $L$ in the sense of Rajput and Rosinski [\ref{Rajput}], see [\ref{Fageot}, Definition 3.6].\\

\begin{definition}(see [\ref{Grafakos}, Definition 1.1.1.])
For a measurable function $f\in L^0(\R^d)$ we define the \emph{distribution function} of $f$ as
\begin{align*}
    d_f(\alpha)=\lambda^d(\{ x\in \R^d: |f(x)|>\alpha \}) \textrm{,  }\alpha>0.
\end{align*}
\end{definition}

With the aid of the distribution function we can now obtain a sufficient condition for the existence of the generalized random process $s$ defined by $s(\varphi) =\langle \dot{L}, G(\varphi)  \rangle $, where $G:\R^m\times \R^d\to \R$ is a suitable kernel. This will be crucial in Section 4 for proving the existence of generalized processes as solutions to stochastic partial differential equations as in (\ref{pdov}).

\begin{theorem}\label{theo1}
Let $\dot{L}$ be a L\'evy white noise on $\R^m$ with characteristic triplet $(a,\gamma,\nu)$ and $G:\R^m\times \R^d \to\R$ be a measurable function. Define for every $x\in\R^m$ and $R>0$
\begin{align*}
G_R(x)&:=\int\limits_{B_R(0)}|G(x,y)|\lambda^d(dy)\\
\intertext{ and }
h_R(x) &:=x \int\limits_0^{1/x} d_{G_R}(\alpha)\lambda^1(d\alpha)\textrm{ for }x>0.
\end{align*} 
Assume that $G_R\in L^1(\R^m)\cap L^2(\R^m)$ and
\begin{align}
\label{ass1}\int\limits_{\R} \mathds{1}_{|r|>1} h_R(|r|)\nu(dr)<\infty
\end{align} 
for every $R>0$. Then for $\big(G(\varphi)\big)(x):= \int\limits_{\R^d} G(x,y)\varphi(y)\lambda^d(dy)$ we have that
\begin{align*}
s(\varphi):=\langle\dot{L},G(\varphi)\rangle, \quad \varphi\in \mathcal{D}(\R^d)
\end{align*}
defines a generalized random process.
\end{theorem}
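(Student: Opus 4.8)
The plan is to prove the statement in three steps: first, for a fixed $\varphi\in\mathcal{D}(\R^d)$, to show that $f:=G(\varphi)$ lies in the domain $D(\dot L)$, so that $s(\varphi)=\langle\dot L,f\rangle$ is well defined; second, to check linearity; third, to check that $\varphi_n\to\varphi$ in $\mathcal{D}(\R^d)$ forces $s(\varphi_n)\to s(\varphi)$ in probability. For the first step, fix $R>0$ with $\supp\varphi\subseteq B_R(0)$; then $|f(x)|\le\|\varphi\|_\infty G_R(x)$ for a.e.\ $x$, so the hypothesis $G_R\in L^1(\R^m)\cap L^2(\R^m)$ gives $f\in L^1(\R^m)\cap L^2(\R^m)$, and $f$ is measurable by Fubini. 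Since $\dot L$ is identified with a L\'evy basis in the sense of Rajput--Rosinski, I would invoke their integrability criterion: $f\in D(\dot L)$ exactly when the three integrals over $\R^m$ of $\big|\gamma f+\int_\R fr(\mathds{1}_{|fr|\le1}-\mathds{1}_{|r|\le1})\,\nu(dr)\big|$, of $af^2$, and of $\int_\R\min(1,|fr|^2)\,\nu(dr)$ are finite. The Gaussian term integrates to $a\|\varphi\|_\infty^2\|G_R\|_2^2$ and the $|\gamma f|$-part of the drift to $|\gamma|\|\varphi\|_\infty\|G_R\|_1$, both finite; splitting the remaining $\nu$-integrals at $|r|=1$, the part over $\{|r|\le1\}$ is reduced by the elementary bounds $\min(1,u^2)\le u^2$ and $|u|\,\mathds{1}_{|u|>1}\le u^2$ to a constant times $\|G_R\|_2^2\int_{|r|\le1}r^2\,\nu(dr)<\infty$.

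The heart of the first step is the part over $\{|r|>1\}$. Here I would use the layer-cake identity
\[
\int_{\R^m}\min\!\big(1,|f(x)|\,t\big)\,\lambda^m(dx)=t\int_0^{1/t}d_f(\alpha)\,\lambda^1(d\alpha),\qquad t>0,
\]
together with $d_f(\alpha)\le d_{G_R}(\alpha/\|\varphi\|_\infty)$ and a substitution, to obtain the bound $\le h_R(\|\varphi\|_\infty t)$; combined with $\min(1,u^2)\le\min(1,u)$ and $|u|\,\mathds{1}_{|u|\le1}\le\min(1,|u|)$ this dominates the $\{|r|>1\}$ contributions to all three Rajput--Rosinski integrals by $\int_{|r|>1}h_R(\|\varphi\|_\infty|r|)\,\nu(dr)$. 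To deduce finiteness of this from (\ref{ass1}) I would prove the scaling estimate $h_R(cx)\le\max(1,c)\,h_R(x)$ for all $c,x>0$: for $c\ge1$ it is immediate from $d_{G_R}\ge0$ and $1/(cx)\le1/x$; for $c<1$ one writes $\int_0^{1/(cx)}=\int_0^{1/x}+\int_{1/x}^{1/(cx)}$ and uses that $d_{G_R}$ is non-increasing together with $d_{G_R}(1/x)\le h_R(x)$ to absorb the extra piece. Then $\int_{|r|>1}h_R(\|\varphi\|_\infty|r|)\,\nu(dr)\le\max(1,\|\varphi\|_\infty)\int_{|r|>1}h_R(|r|)\,\nu(dr)<\infty$, so $f=G(\varphi)\in D(\dot L)$, and linearity of $s$ is inherited from linearity of $\varphi\mapsto G(\varphi)$ and of integration against the L\'evy basis on its domain.

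For continuity it suffices to treat $\varphi_n\to0$ in $\mathcal{D}(\R^d)$: then all $\supp\varphi_n$ lie in a common $B_R(0)$ and $\ep_n:=\|\varphi_n\|_\infty\to0$, whence $|G(\varphi_n)|\le\ep_n G_R$. It is enough to show $\bE\big[e^{it\,s(\varphi_n)}\big]\to1$ for every $t\in\R$, and since this equals $\exp\!\big(\int_{\R^m}\psi(t\,G(\varphi_n)(x))\,\lambda^m(dx)\big)$ I must show the exponent tends to $0$. Decomposing $\psi$ via the L\'evy--Khintchine form, the drift and Gaussian parts are $O(\ep_n)$ and $O(\ep_n^2)$, the small-jump part is $O\big(\ep_n^2\|G_R\|_2^2\int_{|r|\le1}r^2\,\nu(dr)\big)$ by $|e^{iu}-1-iu|\le u^2/2$, and the large-jump part is at most $2\int_{|r|>1}h_R(|t|\ep_n|r|)\,\nu(dr)$ by $|e^{iu}-1|\le2\min(1,|u|)$ and the layer-cake identity. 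Once $|t|\ep_n\le1$, the scaling estimate dominates the last integrand by $h_R(|r|)$, which is $\nu$-integrable on $\{|r|>1\}$ by (\ref{ass1}); and it tends to $0$ pointwise because $h_R(x)\to0$ as $x\to0^+$ (indeed $x\int_0^{1/x}d_{G_R}\le x\|G_R\|_1\to0$), so dominated convergence finishes the argument and $s$ is a generalized random process.

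I expect the main obstacle to be precisely this translation of the Rajput--Rosinski tail terms and of the L\'evy--Khintchine integrand into the functional $h_R$ by way of the distribution function, together with the scaling estimate $h_R(cx)\le\max(1,c)\,h_R(x)$ that is needed to absorb the factor $\|\varphi\|_\infty$; once the layer-cake identity and this estimate are in place, the remaining work is routine case-splitting and dominated convergence.
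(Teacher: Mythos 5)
Your proposal is correct, and its technical core coincides with the paper's: both arguments rest on the Rajput--Rosinski three-integral criterion, the pointwise domination $|\big(G(\varphi)\big)(x)|\le\|\varphi\|_\infty G_R(x)$, the comparison $d_{G(\varphi)}(\alpha)\le d_{G_R}(\alpha/\|\varphi\|_\infty)$, and the $L^2$-bound for the term $\int |G(\varphi)|\mathds{1}_{|G(\varphi)|>1/|r|}$. Where you diverge is in how the two halves are organized. The paper proves continuity by showing that the three Rajput--Rosinski functionals evaluated at $G(\varphi_n)$ tend to zero and then invoking the equivalence of modular convergence with convergence in probability ([\ref{Fageot}, Theorem 3.10]), delegating the tail manipulations to ``similar arguments as in [\ref{Berger}, Theorem 3.4]''; you instead verify membership $G(\varphi)\in D(\dot L)$ once and establish continuity via the characteristic functional $\exp\big(\int\psi(tG(\varphi_n))\big)$ and L\'evy's continuity theorem, which is more elementary and avoids the modular-convergence machinery. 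You also make explicit the two facts the paper leaves implicit in its citation chain: the layer-cake identity $\int_{\R^m}\min(1,|f|t)\,\lambda^m(dx)=t\int_0^{1/t}d_f(\alpha)\,\lambda^1(d\alpha)$ and the scaling estimate $h_R(cx)\le\max(1,c)\,h_R(x)$ (whose $c<1$ case correctly uses monotonicity of $d_{G_R}$ and $d_{G_R}(1/x)\le h_R(x)$); this is exactly the step needed to absorb the factor $\|\varphi_n\|_\infty$ into hypothesis \eqref{ass1}, and your domination $h_R(|t|\ep_n|r|)\le h_R(|r|)$ for $|t|\ep_n\le 1$ together with $h_R(x)\le x\|G_R\|_{L^1}\to0$ makes the dominated-convergence step airtight. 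The trade-off is that the paper's route yields the convergence statement in the exact form needed to identify the limit in $D(\dot L)$, while yours is self-contained and makes the role of $h_R$ transparent; both are valid proofs of the theorem.
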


\begin{proof}
The proof is similar to that of [\ref{Berger}, Theorem 3.4], hence we only mention the needed modifications. We need to show that $G( \varphi) \in D(\dot{L})$ and $\langle \dot{L}, G(\varphi_n)\rangle\to \langle \dot{L}, G(\varphi)\rangle$ as $n\to\infty$ in probability for a sequence $(\varphi_n)_{n\in \N}$ converging to $\varphi$ in $\mathcal{D}(\R^d)$. As $\langle \dot{L},G( \cdot) \rangle$ is linear, this is equivalent to check that $\langle \dot{L}, G(\varphi_n-\varphi)\rangle\to 0$ as $n\to\infty$ in probability (see [\ref{Fageot}], Theorem 3.10.). Now given Theorem 2.7 in [\ref{Rajput}], we have to show 
\begin{align}
&\label{david1}\int\limits_{\R^m} \big|\gamma \big(G(\varphi_n)\big)(s) + \int\limits_{\R} r \big(G(\varphi_n)\big)(s)\left(\mathds{1}_{|r(G(\varphi_n))(s)|\le 1}-\mathds{1}_{|r|\le 1}\right)\nu(dr)\big|\lambda^m(ds)\to 0,\\
 &\label{david2}\int\limits_{\R^m}  \int\limits_{\R} \min \big(1, \big|r \big(G(\varphi_n)\big)(s)\big|^2\big)\nu(dr)\lambda^m(ds)\to 0 \quad \textrm{  and }\\
&\label{david3} a^2\int\limits_{\R^m}\big| \big(G(\varphi_n)\big)(s) \big|^2\lambda^m(ds)\to 0
\end{align}
as $n\to\infty$ if $\varphi_n\to 0$ for $n\to \infty$ in $\mathcal{D}(\R^d)$.\\
In the following we give a pointwise upper bound for $G(\varphi)$. Let therefore be $R>0$ such that supp$(\varphi_n)\subset B_r(0)$ for some $r<R$. Then it holds for every $x\in \R^m$
\begin{align}
    \nonumber \big| \big(G(\varphi_n)\big)(x)| &\le \int\limits_{\R^d}  \big|G(x,y)\varphi_n(y)|\lambda^d(dy)\\ 
    &= \int\limits_{B_R(0)} |G(x,y)\varphi_n(y)|\lambda^d(dy)
    \le G_R(x)\|\varphi_n\|_{\infty}\label{l2}.
\end{align}
Now we show (\ref{david1}). Since $G_R\in L^1(\R^m) $, we have 
\begin{align*}
    \int\limits_{\R^m} \big|\gamma \big(G(\varphi_n)\big)(x)\big|\lambda^m(dx)\le |\gamma|\, \|\varphi_n\|_{\infty}\|G_R\|_{L^1(\R^m)}\to 0
\end{align*}
for $n\to \infty$. Furthermore, we obtain with (\ref{l2}) for $\alpha>0$ 
\begin{align}
     \nonumber d_{G(\varphi_n)}(\alpha)=&\lambda^m\left( \{x\in\R^m: |(G(\varphi_n))(x)|> \alpha \}\right)\\
\le& \lambda^m\left(\big\{ x\in\R^m: |G_R(x)|> \frac{\alpha}{\|\varphi_n\|_{\infty}}\big\} \right)=d_{G_R}\left(\frac{\alpha}{\|\varphi_n\|_{\infty}}\right)\label{dist1}.
\end{align}
Since $G_R\in L^2(\R^m)$ we have
\begin{align}
    \nonumber \int\limits_{\R^m} | \big(G(\varphi_n)\big)(x)|\mathds{1}_{| (G(\varphi_n))(x)|> \frac{1}{|r|}}\lambda^m(dx)
    &\le \int\limits_{\R^m} |G\big(\varphi_n\big)(x)|^2 |r| \lambda^m(dx) \\
    &\le  \|\varphi_n\|^2_{\infty} \|G_R\|^2_{L^2(\R^m)}|r|.\label{U1}
\end{align}
Now we get (\ref{david1}) with similar arguments as in the proof of Theorem 3.4 of [\ref{Berger}], where we use (\ref{U1}) instead of the Young Inequality.\\
Since it holds
\begin{align*}
    \|\big(G(\varphi_n)\big)(x)\|^2_{L^2(\R^m)} \le  \|\varphi_n\|^2_{\infty} \|G_R\|^2_{L^2(\R^m)} \to 0
\end{align*}
for $n\to \infty$, we get (\ref{david2}) and (\ref{david3}) again with the same arguments as in the proof of Theorem 3.4 in [\ref{Berger}]. Hence $G(\varphi_n)\to G(\varphi)$ in $D(\dot{L})$ as $n\to\infty$.
\end{proof}

In Theorem \ref{theo1} we assumed that $G_R\in L^1(\R^m)\cap L^2(\R^m)$. In the following Proposition we will show that, if the L\'evy white noise has no Gaussian part and it holds $ \int_{\R} |r|^{\beta} \mathds{1}_{|r|\le 1} \nu(dr)<\infty$, for $\beta\in (1,2)$, then we can assume $G_R\in L^1(\R^m)\cap L^{\beta}(\R^m)$ instead.

\begin{proposition}
Let $G:\R^m\times \R^d\to \R$ be a measurable function and for $R>0$ let $G_R$ and $h_R$ be defined as in Theorem \ref{theo1}. Furthermore, let $\dot{L}$ be a L\'evy white noise on $\R^m$ with characteristic triplet $(0,\gamma,\nu)$ such that (\ref{ass1}) holds. If further $G_R \in L^1(\R^m)\cap L^{\beta}(\R^m)$ for some $\beta\in (1,2)$ and
\begin{align*}
    \int\limits_{\R} |r|^{\beta} \mathds{1}_{|r|\le 1} \nu(dr)<\infty,
\end{align*}
then
\begin{align*}
s(\varphi):=\langle\dot{L},G(\varphi)\rangle, \quad \varphi\in \mathcal{D}(\R^d)
\end{align*}
defines a generalized random process, where $G(\varphi)$ is defined as in Theorem \ref{theo1}.
\end{proposition}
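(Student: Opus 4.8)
The plan is to run the argument of Theorem \ref{theo1} essentially verbatim, isolating the two places where $G_R\in L^2(\R^m)$ was used and checking that they survive with $L^\beta$ in place of $L^2$ once the Gaussian part vanishes. First I would reduce, exactly as there and using linearity of $G(\cdot)$ and of $\langle\dot L,\cdot\rangle$, to showing that $\langle\dot L,G(\varphi_n)\rangle\to 0$ in probability whenever $\varphi_n\to 0$ in $\mathcal D(\R^d)$; by [\ref{Rajput}, Theorem 2.7] (invoked as in [\ref{Fageot}, Theorem 3.10]) this amounts to verifying (\ref{david1}), (\ref{david2}) and (\ref{david3}) for the triplet $(0,\gamma,\nu)$. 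I would fix $R>0$ with $\mathrm{supp}(\varphi_n)\subset B_R(0)$ for all $n$, so that the pointwise bound (\ref{l2}) and the distribution-function bound (\ref{dist1}) are available and so that $\|G(\varphi_n)\|_{L^\beta(\R^m)}^\beta\le\|\varphi_n\|_\infty^\beta\|G_R\|_{L^\beta(\R^m)}^\beta\to 0$. Condition (\ref{david3}) is then automatic because $a=0$.

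Next I would treat the small-jump contributions $|r|\le 1$, replacing the exponent $2$ by $\beta$. In (\ref{david1}) the drift term is $\le|\gamma|\,\|\varphi_n\|_\infty\|G_R\|_{L^1(\R^m)}\to 0$, and the correction integrand is supported on the symmetric difference of $\{|r|\le1\}$ and $\{|r\,(G(\varphi_n))(x)|\le1\}$; on the piece $\{|r|\le1,\ |r\,(G(\varphi_n))(x)|>1\}$ one has $|r\,(G(\varphi_n))(x)|\,\mathds{1}_{|r(G(\varphi_n))(x)|>1}\le|r|^\beta|(G(\varphi_n))(x)|^\beta$, so this piece is bounded by $\big(\int_{\R}|r|^\beta\mathds{1}_{|r|\le1}\nu(dr)\big)\,\|\varphi_n\|_\infty^\beta\|G_R\|_{L^\beta(\R^m)}^\beta\to 0$, which is exactly where the new moment hypothesis enters. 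For the $\{|r|\le1\}$ part of (\ref{david2}) I would use the elementary inequality $\min(1,u^2)\le u^\beta$ for $u\ge0$ (valid since $1<\beta<2$) to obtain the same type of bound. These are the only changes to the small-$|r|$ estimates: the remaining piece $\{|r|>1,\ |r\,(G(\varphi_n))(x)|\le1\}$ of the correction term in (\ref{david1}) and the $\{|r|>1\}$ part of (\ref{david2}) are handled exactly as in Theorem \ref{theo1}, because there they are phrased through $h_R$ and never used $L^2$: one bounds the integrals $\int_{\R^m}|r\,(G(\varphi_n))(x)|\,\mathds{1}_{|r(G(\varphi_n))(x)|\le1}\,\lambda^m(dx)$ and $\int_{\R^m}\mathds{1}_{|r(G(\varphi_n))(x)|>1}\,\lambda^m(dx)$ — the latter via monotonicity of $d_{G_R}$ — both by $h_R(|r|\,\|\varphi_n\|_\infty)$, reducing everything to $\int_{\R}\mathds{1}_{|r|>1}h_R(|r|\,\|\varphi_n\|_\infty)\nu(dr)$.

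The main obstacle, and the one point I would spell out in detail, is to show that $\int_{\R}\mathds{1}_{|r|>1}h_R(|r|\,\|\varphi_n\|_\infty)\nu(dr)\to 0$: hypothesis (\ref{ass1}) only gives finiteness of $\int_{\R}\mathds{1}_{|r|>1}h_R(|r|)\nu(dr)$, and, no $\beta$-moment of $\nu$ near infinity being assumed, the small-$|r|$ device is unavailable. Here I would record two elementary facts about $h_R$. It is non-decreasing, since $t\mapsto\int_0^t d_{G_R}(\alpha)\lambda^1(d\alpha)$ is concave and vanishes at $0$, hence $t\mapsto\frac1t\int_0^t d_{G_R}(\alpha)\lambda^1(d\alpha)$ is non-increasing, i.e. $x\mapsto h_R(x)$ is non-decreasing. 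And $h_R(0^+)=0$, since $\int_0^\infty d_{G_R}(\alpha)\lambda^1(d\alpha)=\|G_R\|_{L^1(\R^m)}<\infty$. Thus for $n$ large enough that $\|\varphi_n\|_\infty\le1$ we have $h_R(|r|\,\|\varphi_n\|_\infty)\le h_R(|r|)$, which is $\mathds{1}_{|r|>1}\nu(dr)$-integrable by (\ref{ass1}), while $h_R(|r|\,\|\varphi_n\|_\infty)\to h_R(0^+)=0$ for each fixed $r$; dominated convergence then gives the claim. Assembling the three estimates yields (\ref{david1})–(\ref{david3}), hence $G(\varphi_n)\to 0$ in $D(\dot L)$ (and, applied to a fixed test function, the same bounds show the corresponding integrals are finite, so $G(\varphi)\in D(\dot L)$); linearity of $G(\cdot)$ and of $\langle\dot L,\cdot\rangle$ then promotes this to continuity of $\varphi\mapsto s(\varphi)=\langle\dot L,G(\varphi)\rangle$ from $\mathcal D(\R^d)$ into $L^0(\Omega)$, which together with linearity is precisely the assertion that $s$ is a generalized random process.
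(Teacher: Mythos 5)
Your proposal is correct and follows essentially the same route as the paper: both isolate the two small-jump terms where $\|G_R\|_{L^2(\R^m)}$ entered the proof of Theorem \ref{theo1} (the $\{|r|\le 1,\ |r(G(\varphi_n))(x)|>1\}$ piece of \eqref{david1} and the $\{|r|\le 1\}$ piece of \eqref{david2}) and re-estimate them by $\|\varphi_n\|_\infty^\beta\|G_R\|_{L^\beta(\R^m)}^\beta\int_{\R}|r|^\beta\mathds{1}_{|r|\le 1}\nu(dr)$, deferring the remaining $\{|r|>1\}$ terms to the $h_R$-argument of Theorem \ref{theo1}. Your explicit monotonicity-plus-dominated-convergence justification that $\int_{\R}\mathds{1}_{|r|>1}h_R(|r|\,\|\varphi_n\|_\infty)\nu(dr)\to 0$ is a welcome elaboration of a step the paper only cites.
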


\begin{proof}
Again, the proof is similar to that of [\ref{Berger}, Theorem 3.4] and hence we only mention the needed modifications. As $G_R\in L^1(\R^m)$ we only have to consider the terms which were estimated with $\|G_R\|_{ L^2(\R^m)}$ as can be seen from the proof of [\ref{Berger}, Theorem 3.4]. These are 
\begin{align}
     &\int\limits_{\R} |r| \mathds{1}_{|r|\le 1} \int\limits_{\R^m} | \big(G(\varphi_n)\big)(x)|\mathds{1}_{| (G(\varphi_n))(x)|> \frac{1}{|r|}}\lambda^m(dx) \nu(dr)\label{annahmen1}
\intertext{ and }
     & \int\limits_{\R} \int\limits_{\R^m}  |r|^2 |\big(G(\varphi_n)\big)(x)|^2\mathds{1}_{| r(G(\varphi_n))(x)|\le  1} \mathds{1}_{|r|\le 1} \lambda^m(dx) \nu(dr)\label{annahmen2}.
\end{align}
and we have to show that they converge to $0$ as $\varphi_n\to 0$ in $\mathcal{D}(\R^d)$. We have
\begin{align*}
    \int\limits_{\R^m} | \big(G(\varphi_n)\big)(x)|\mathds{1}_{| (G(\varphi_n))(x)|> \frac{1}{|r|}}\lambda^m(ds)
    &\le \|(G(\varphi_n))\|^{\beta}_{L^{\beta}(\R^m)} |r|^{\beta-1} \le  \|\varphi_n\|^{\beta}_{\infty} \|G_R\|^{\beta}_{L^{\beta}(\R^m)}|r|^{\beta-1}.
\end{align*}
So it follows that the term (\ref{annahmen1}) converges to $0$ as $\varphi_n\to 0$ in $\mathcal{D}(\R^d)$. Furthermore, it holds
\begin{align*}
    &\int\limits_{\R}\int\limits_{\R^m}  |r|^2 |\big(G(\varphi_n)\big)(x)|^2\mathds{1}_{| r(G(\varphi_n))(x)|\le  1} \mathds{1}_{|r|\le 1} \lambda^m(dx) \nu(dr)\\
  = & \int\limits_{\R} \int\limits_{\R^m} |r|^{\beta} |\big(G(\varphi_n)\big)(x)|^{\beta}  |r|^{2-\beta}|\big(G(\varphi_n)\big)(x)|^{2-\beta}\mathds{1}_{| r(G(\varphi_n))(x)|\le  1} \mathds{1}_{|r|\le 1} \lambda^m(dx) \nu(dr)\\
    \le & \int\limits_{\R} |r|^{\beta}\mathds{1}_{|r|\le 1} \nu(dr)  \|\varphi_n\|^{\beta}_{\infty} \|G_R\|^{\beta}_{L^{\beta}(\R^m)}.
\end{align*}
This shows that the term (\ref{annahmen2}) converges to $0$ as $\varphi_n\to 0 $ in $\mathcal{D}(\R^d)$ and the rest of the proof follows with similar arguments as mentioned in the proof of Theorem \ref{theo1}.
\end{proof}

When $G_R\notin L^1(\R^m)$ we can still obtain a generalized process $s$ under some extra conditions. Similar to Theorem 3.5 in [\ref{Berger}] we have

\begin{theorem}\label{theo2}
Let $G:\R^m\times\R^d\to\R$ be a measurable function such that $G_R\in L^2(\R^m)$, where $G_R$ and $G(\varphi)$, $\varphi\in\mathcal{D}(\R^d)$ are defined as in Theorem \ref{theo1}. If the first moment of the L\'evy white noise $\dot{L}$ on $\R^m$ with characteristic triplet $(a,\gamma, \nu) $ vanishes, i.e. $\mathbb{E}|\langle \dot{L},\varphi \rangle|< \infty$ and $\mathbb{E}\langle \dot{L},\varphi\rangle = 0$ for every $\varphi\in \mathcal{D}(\R^d)$,  then $s:\mathcal{D}(\R^d)\to L^0(\Omega)$ defined by 
\begin{align*}
    s(\varphi): = \langle \dot{L}, G(\varphi) \rangle 
\end{align*}
is a generalized random process if
\begin{align}
    \int\limits_{\R} \mathds{1}_{|r|>1} |r| \int\limits_{\frac{1}{|r|}}^{\infty} d_{G_R}(\alpha)\lambda^1(d\alpha)\nu(dr)<\infty \label{g1}
    \intertext{and}
     \int\limits_{\R} \mathds{1}_{|r|>1} |r|^2 \int\limits_{0}^{\frac{1}{|r|}} \alpha d_{G_R}(\alpha)\lambda^1(d\alpha)\nu(dr)<\infty\label{g2}
\end{align}
for all $R>0$.
\end{theorem}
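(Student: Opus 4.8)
The plan is to follow the scheme of Theorem \ref{theo1} and of the proof of [\ref{Berger}, Theorem 3.5], recording only the changes forced by weakening the hypothesis $G_R\in L^1(\R^m)$ to (\ref{g1})--(\ref{g2}). By [\ref{Fageot}, Theorem 3.10] it suffices to check that $G(\varphi)\in D(\dot L)$ for every $\varphi\in\mathcal D(\R^d)$ and that $\langle\dot L,G(\varphi_n)\rangle\to0$ in probability whenever $\varphi_n\to0$ in $\mathcal D(\R^d)$; by [\ref{Rajput}, Theorem 2.7] both reduce to showing that the three integrals (\ref{david1}), (\ref{david2}), (\ref{david3}) --- now with $f:=G(\varphi)$, resp.\ $f:=G(\varphi_n)$ --- are finite, resp.\ tend to $0$. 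For $\varphi$ with support in $B_R(0)$ one has $|G(\varphi)(x)|\le\|\varphi\|_\infty G_R(x)$ exactly as in (\ref{l2}), hence $G(\varphi)\in L^2(\R^m)$; as in Theorem \ref{theo1} this, together with $\int_{|r|\le1}r^2\,\nu(dr)<\infty$, takes care of (\ref{david3}) and of all contributions to (\ref{david1}) and (\ref{david2}) over $\{|r|\le1\}$ (for (\ref{david1}) via $|rv|\,\mathds{1}_{|rv|>1}\le r^2v^2$).

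The genuinely new point is the $\{|r|>1\}$-part of (\ref{david1}), which in Theorem \ref{theo1} used $G_R\in L^1$. Here I would exploit that the first moment of $\dot L$ vanishes: $\mathbb E|\langle\dot L,\varphi\rangle|<\infty$ forces $\int_{|r|>1}|r|\,\nu(dr)<\infty$ and $\mathbb E\langle\dot L,\varphi\rangle=0$ forces $\gamma=-\int_{|r|>1}r\,\nu(dr)$. Decomposing $\mathds{1}_{|rv|\le1}-\mathds{1}_{|r|\le1}=\mathds{1}_{|r|>1,\,|rv|\le1}-\mathds{1}_{|r|\le1,\,|rv|>1}$ --- each piece supported away from $r=0$, which keeps the manipulation legitimate even when $\int_{|r|\le1}|r|\,\nu(dr)=\infty$ --- and recombining, one obtains for every $v\in\R$
\begin{align*}
\gamma v+\int_{\R}rv\big(\mathds{1}_{|rv|\le1}-\mathds{1}_{|r|\le1}\big)\,\nu(dr)=-\int_{\R}rv\,\mathds{1}_{|rv|>1}\,\nu(dr).
\end{align*}
Taking $v=G(\varphi)(s)$, integrating in $s$ and bounding $|rv|\,\mathds{1}_{|rv|>1}\le r^2v^2$ on $\{|r|\le1\}$, the integral in (\ref{david1}) is at most $\big(\int_{|r|\le1}r^2\,\nu(dr)\big)\|G(\varphi)\|_{L^2(\R^m)}^2+\int_{|r|>1}\int_{\R^m}|rG(\varphi)(s)|\,\mathds{1}_{|rG(\varphi)(s)|>1}\,\lambda^m(ds)\,\nu(dr)$; since $\|G(\varphi)\|_{L^2}\le\|\varphi\|_\infty\|G_R\|_{L^2}$, everything is reduced to the same kind of $\{|r|>1\}$-integral that also controls (\ref{david2}).

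It remains to estimate, for $|r|>1$ and integrated against $\nu$, the quantities $\int_{\R^m}(|rG(\varphi)(s)|-1)^+\lambda^m(ds)$ --- which enters via $|rv|\,\mathds{1}_{|rv|>1}\le(|rv|-1)^++\min(1,|rv|^2)$ --- and $\int_{\R^m}\min(1,|rG(\varphi)(s)|^2)\lambda^m(ds)$, the $\{|r|>1\}$-part of (\ref{david2}). A layer-cake computation gives, for every $c>0$, $\int_{\R^m}(|G_R(s)|-c)^+\lambda^m(ds)=\int_c^\infty d_{G_R}(\alpha)\,\lambda^1(d\alpha)$ and $\int_{\R^m}\min(c^2,|G_R(s)|^2)\lambda^m(ds)=2\int_0^c\alpha\,d_{G_R}(\alpha)\,\lambda^1(d\alpha)$, so that with $c=1/|r|$ these $\{|r|>1\}$-integrals of $G_R$ against $\nu$ are exactly (\ref{g1}) and twice (\ref{g2}). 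To replace $G(\varphi)$ by $G_R$ one uses $|G(\varphi)|\le\|\varphi\|_\infty G_R$: when $\|\varphi\|_\infty\le1$ the $G(\varphi)$-integrands are pointwise dominated by the $G_R$-integrands (since $t\mapsto(t-1)^+$ and $t\mapsto\min(1,t^2)$ are nondecreasing), while for $M:=\|\varphi\|_\infty\ge1$ the elementary inequalities $(Mt-1)^+\le M^2\min(t,t^2)$ and $\min(1,M^2t^2)\le M^2\min(1,t^2)$ reduce everything, up to the factor $M^2$, to $\int_{|r|>1}\int_{\R^m}\min(|rG_R(s)|,|rG_R(s)|^2)\,\lambda^m(ds)\,\nu(dr)$, finite by (\ref{g1}) and (\ref{g2}). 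This gives $G(\varphi)\in D(\dot L)$. For the continuity one takes $\varphi_n\to0$ in $\mathcal D(\R^d)$, so eventually $\|\varphi_n\|_\infty\le1$ and $G(\varphi_n)(s)\to0$ for every $s$, and concludes by dominated convergence using the integrable majorants $r^2|G_R(s)|^2$ on $\{|r|\le1\}$ and $(|rG_R(s)|-1)^+$, $\min(1,|rG_R(s)|^2)$ on $\{|r|>1\}$, which force (\ref{david1}), (\ref{david2}), (\ref{david3}) to tend to $0$. I expect the crux to be the identity for the drift term --- recognising that it is the vanishing of $\mathbb E\langle\dot L,\varphi\rangle$, rather than an $L^1$ hypothesis on $G_R$, that makes (\ref{david1}) tractable --- together with the bookkeeping needed to keep that rewriting valid near $r=0$ and to absorb an arbitrary constant $\|\varphi\|_\infty$.
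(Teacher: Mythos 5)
Your argument is correct and is essentially the same as the paper's: the published proof simply defers to [Berger, Theorem 3.5], substituting $G(\varphi_n)$ for $G\ast\varphi_n$ and using the pointwise bound $|G(\varphi_n)|\le\|\varphi_n\|_{\infty}G_R$, and what you have written out --- the vanishing first moment converting the drift term into $-\int_{\R} r v\,\mathds{1}_{|rv|>1}\,\nu(dr)$, together with the layer-cake identities that make (\ref{g1}) and (\ref{g2}) exactly the hypotheses needed on the region $|r|>1$ --- is a faithful reconstruction of that cited argument. No gaps.
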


\begin{proof}
Let $(\varphi_n)_{n\in\N}$ be a sequence converging to $0$ in $\mathcal{D}(\R^d)$ such that supp $\varphi_n\subset B_R(0)$ for some $R>0$ and all $n\in\N$. This proof follows with the same arguments as in the proof of [\ref{Berger}, Theorem 3.5], where we use $G(\varphi_n)$ instead of $G\ast \varphi_n$ and $\|\varphi_n\|_{\infty} \|G_R\|_{L^2(\R^m)}<\infty$ instead of $\|G\ast\varphi_n\|_{L^2(\R^d)}<\infty$.
\end{proof}

\begin{example}\label{example1}
Let $d\ge 1$, $q\in [1,2)$ and $\frac{d}{2}<p<\frac{d}{q}$. We consider $G:\R^d\times \R^d\to\R$ such that it holds 
\begin{align*}
	|G(x,y)|\|x-y\|^p \le w(y)
\end{align*}
for all $x,y\in \R^d$, where $w\in  L^{q*}_{loc}(\bR^d)$ with $q^*=\frac{q}{q-1}$.
With the H\"older's inequality we conclude for $R>0$ and $x\in \bR^d$
\begin{align}
    \nonumber G_R(x):&=\int\limits_{B_R(0)} |G(x,y)| \lambda^d(dy) \\
    \nonumber&\le \left( \,\,\int\limits_{B_R(0)} \|x-y \|^{-qp} \lambda^d(dy) \right)^{1/q} \left(\,\, \int\limits_{B_R(0)} |w(y)|^{q^*} \lambda^d(dy)\right)^{1/q*}\\
    \label{1002}&\le C(w,q,p,d,R) \min\{1,\|x\|^{-p}\}.
\end{align}
We obtain that
\begin{align*}
    \|G_R\|_{L^2(\R^d)}<\infty.
\end{align*}
Furthermore, we observe for a L\'evy white noise $\dot{L}$ with characteristic triplet $(a,\gamma,\nu)$ that
\begin{align*}
    &\int \limits_0^{\frac{1}{|r|}} \alpha d_{G_R}(\alpha) \lambda^1(d\alpha) \le C  \int\limits_0^{\frac{1}{|r|}} \alpha(1+\alpha^{-\frac{d}{p}})  \lambda^1(d\alpha)
    = \tilde{C} (|r|^{-2}+|r|^{\frac{d}{p}-2})
\intertext{and}
    &\int\limits_{\R} \mathds{1}_{|r|>1} |r|^2\int \limits_0^{\frac{1}{|r|}} d_{G_R}(\alpha) \lambda^1(d\alpha) \nu(dx) \le \int\limits_{\R} \mathds{1}_{|r|>1} \tilde{C}(1+|r|^{\frac{d}{p}}) \nu(dx),
\end{align*}
where $\tilde{C}>0$. If the L\'evy white noise $\dot{L}$ has vanishing first moment then it follows from [\ref{Sato}, Example 25.12] that (\ref{g1}) is satisfied. So if additionally $\dot{L}$ satisfies
\begin{align*}
	\int\limits_{\R} \mathds{1}_{|r|>1}|r|^{\frac{d}{p}} \nu(dx)< \infty 
\end{align*}
then it follows from Theorem \ref{theo2} that
\begin{align*}
    s:\mathcal{D}(\bR^d)\to L^0(\Omega),\,\varphi\mapsto s(\varphi):=\langle \dot{L}, G(\varphi)\rangle
\end{align*}
defines a well-defined generalized random process.
\end{example}

\subsection{Moment properties}\label{section 4.4}
\noindent Next we show, that if the L\'evy white noise $\dot{L}$ has finite $\beta>0$ moment, then so has the generalized random process $s(\varphi) = \langle \dot{L}, G(\varphi) \rangle$, $\varphi\in \mathcal{D}(\R^d)$.
\begin{theorem}\label{momentprop}
Let $G:\bR^m\times\bR^d\to \bR$ be a measurable function different from $0$ and $\dot{L}$ be a L\'{e}vy white noise on $\R^m$ with characteristic triplet $(a,\gamma,\nu)$ and assume that $\langle s,\varphi\rangle:=\langle\dot{L},G(\varphi)\rangle$, $\varphi\in \mathcal{D}(\R^d)$ is a well-defined generalized random process. Let $\beta>0$
\begin{itemize}
    \item[i)] If $0<\beta< 2$ assume that $G_R\in L^\beta (\bR^m)\cap  L^2 (\bR^m)$ with $G_R$ as defined in Theorem \ref{theo1}. If $\dot{L}$ has finite $\beta$-moment, then so has $s$. If $\beta\ge2$ it is sufficient to assume that $G_R\in L^\beta (\bR^d)$.
    \item[ii)] If $s$ has finite $\beta$-moment, then $\dot{L}$ has also finite $\beta$-moment.
\end{itemize}
\end{theorem}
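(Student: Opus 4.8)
The plan is to reduce both implications to the classical moment criterion for infinitely divisible laws: if $X$ is infinitely divisible with L\'evy measure $\rho$, then $\bE|X|^\beta<\infty$ if and only if $\int_{|r|>1}|r|^\beta\,\rho(dr)<\infty$ (see [\ref{Sato}, Theorem 25.3]). Since $s(\varphi)=\langle\dot L,G(\varphi)\rangle$ is assumed to be well defined, $G(\varphi)\in D(\dot L)$ for every $\varphi\in\mathcal D(\R^d)$, and by the integration theory of Rajput and Rosinski [\ref{Rajput}] the random variable $s(\varphi)$ is infinitely divisible with L\'evy measure $\rho_\varphi$ given, for every nonnegative measurable $\Phi$ on $\R$ with $\Phi(0)=0$, by
\begin{align*}
\int_{\R}\Phi(u)\,\rho_\varphi(du)=\int_{\R^m}\int_{\R}\Phi\big(r\,(G(\varphi))(x)\big)\,\nu(dr)\,\lambda^m(dx),
\end{align*}
i.e.\ $\rho_\varphi$ is the push-forward of $\nu\otimes\lambda^m$ under $(r,x)\mapsto r\,(G(\varphi))(x)$. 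In particular $\bE|s(\varphi)|^\beta<\infty$ is equivalent to
\begin{align*}
I(\varphi):=\int_{\R^m}\int_{\R}\big|r\,(G(\varphi))(x)\big|^\beta\,\mathds{1}_{|r(G(\varphi))(x)|>1}\,\nu(dr)\,\lambda^m(dx)<\infty ,
\end{align*}
while $\dot L$ has finite $\beta$-moment if and only if $\int_{|r|>1}|r|^\beta\,\nu(dr)<\infty$.

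For part i) I would estimate $I(\varphi)$ by splitting the inner integral into the regions $|r|>1$ and $|r|\le1$ and using the pointwise bound $|(G(\varphi))(x)|\le\|\varphi\|_\infty G_R(x)$ of (\ref{l2}), valid once $R$ is chosen with $\supp(\varphi)\subset B_R(0)$, which yields $\int_{\R^m}|(G(\varphi))(x)|^{p}\,\lambda^m(dx)\le\|\varphi\|_\infty^{p}\,\|G_R\|_{L^{p}(\R^m)}^{p}$. On $\{|r|>1\}$ one factors $|r(G(\varphi))(x)|^\beta=|r|^\beta|(G(\varphi))(x)|^\beta$ and combines $\int_{|r|>1}|r|^\beta\,\nu(dr)<\infty$ with $G_R\in L^\beta(\R^m)$. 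On $\{|r|\le1\}$ the indicator forces $|(G(\varphi))(x)|>1$: for $\beta\ge2$ one bounds $|r|^\beta\le|r|^2$ and pairs the L\'evy measure property $\int_{|r|\le1}|r|^2\,\nu(dr)<\infty$ with $G_R\in L^\beta(\R^m)$; for $0<\beta<2$ one instead bounds $|r(G(\varphi))(x)|^\beta\le|r(G(\varphi))(x)|^2$ on $\{|r(G(\varphi))(x)|>1\}$ and pairs $\int_{|r|\le1}|r|^2\,\nu(dr)<\infty$ with $G_R\in L^2(\R^m)$. This is exactly why $G_R\in L^\beta(\R^m)$ alone suffices for $\beta\ge2$ whereas $G_R\in L^\beta(\R^m)\cap L^2(\R^m)$ is needed for $\beta<2$.

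For part ii) I would run the argument in reverse. Since $G$ does not vanish $\lambda^{m+d}$-a.e., Fubini's theorem and the fact that $\mathcal D(\R^d)$ separates locally integrable functions produce a $\varphi_0\in\mathcal D(\R^d)$ with $G(\varphi_0)$ not vanishing $\lambda^m$-a.e.; hence there are $\delta>0$ and a set $A\subset\R^m$ of positive finite Lebesgue measure with $|(G(\varphi_0))(x)|\ge\delta$ for $x\in A$. Restricting $I(\varphi_0)<\infty$ to $x\in A$ and $|r|>1/\delta$, where $|r(G(\varphi_0))(x)|\ge\delta|r|>1$, gives
\begin{align*}
\lambda^m(A)\,\delta^{\beta}\int_{|r|>1/\delta}|r|^{\beta}\,\nu(dr)\le I(\varphi_0)<\infty ,
\end{align*}
so that $\int_{|r|>1/\delta}|r|^\beta\,\nu(dr)<\infty$; together with $\nu(\{|r|>1\})<\infty$ this yields $\int_{|r|>1}|r|^\beta\,\nu(dr)<\infty$, i.e.\ $\dot L$ has finite $\beta$-moment.

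The only genuine computation is the dichotomy between $\beta<2$ and $\beta\ge2$ in part i); the main point requiring care is the representation of the L\'evy measure $\rho_\varphi$ of $s(\varphi)$ as a push-forward of $\nu\otimes\lambda^m$, which makes the truncation term of the characteristic triplet (and the always-harmless Gaussian part) irrelevant to the moment criterion, and, in part ii), the verification that $G\not\equiv0$ indeed forces $G(\varphi_0)\not\equiv0$ for a suitable test function $\varphi_0$.
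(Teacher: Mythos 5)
Your proposal is correct and follows essentially the same route as the paper: both use the Rajput--Rosinski representation of the L\'evy measure of $s(\varphi)$ as the push-forward of $\nu\otimes\lambda^m$ under $(r,x)\mapsto r\,(G(\varphi))(x)$, the Sato-type criterion reducing finiteness of the $\beta$-moment to $\int_{|z|>1}|z|^\beta\nu_{s(\varphi)}(dz)<\infty$, the same split into $|r|\le 1$ (controlled via the $L^2$-, resp.\ $L^\beta$-bound and $\int_{|r|\le1}|r|^2\nu(dr)<\infty$) and $|r|>1$ (controlled via $G_R\in L^\beta$ and the pointwise bound (\ref{l2})), and for ii) the same lower-bound argument restricting to a region where $|G(\varphi_0)|$ is bounded below and $|r|$ is large. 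The only differences are cosmetic (e.g.\ your explicit set $A$ with $|G(\varphi_0)|\ge\delta$ versus the paper's choice of $r_0$ with $\int_{|G(\varphi)|>1/r_0}|G(\varphi)|^\beta>0$).
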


\begin{proof}
From [\ref{Rajput}, Theorem 2.7] we know that the L\'evy measure of the random variable $\langle s,\varphi\rangle$ is given by
\begin{align*}
    \nu_{s(\varphi)}(B) = \int\limits_{\R^m}\int\limits_{\R} \mathds{1}_{B\setminus \{0\} } \left(r G(\varphi)(x)\right) \nu(dr)\lambda^m(dx).
\end{align*}
Then $\langle s,\varphi \rangle$ has finite $\beta$-moment if and only if $\int\limits_{|z|>1} |z|^{\beta} \nu_{s(\varphi)}(dz)<\infty$.\\ \noindent i) Let $\dot{L}$ have finite $\beta$-moment and assume at first that $0< \beta< 2$. We calculate with (\ref{l2}) that
\begin{align*}
    \int\limits_{|z|>1} |z|^{\beta}\nu_{s(\varphi)}(dz)
    =&\int\limits_{\R} |r|^{\beta}\int\limits_{|G(\varphi)(x)|>\frac{1}{|r|}} |G(\varphi)(x)|^{\beta}\lambda^m(dx)\nu(dr)\\
    \le & 
    \int\limits_{|r|\le 1} |r|^{2}\int\limits_{|G(\varphi)(x)|>\frac{1}{|r|}} |G(\varphi)(x)|^{2}\lambda^m(dx)\nu(dr)\\
    &+
    \int\limits_{|r|>1} |r|^{\beta}\int\limits_{\R^m} |G_R(x)|^{\beta} \|\varphi\|^{\beta}_{\infty} \lambda^m(dx)\nu(dr)\\
    \le & \int\limits_{|r|\le 1} |r|
    ^2\nu(dr)\|G_R\|_{L^2(\bR^m)}^2\|\varphi\|_{\infty}^{2}+\|G_R\|_{L^{\beta}(\R^m)}^{\beta}  \int\limits_{|r|>1} |r|^{\beta} \nu(dr)\|\varphi\|_{\infty}^{\beta}<\infty,
\end{align*}
where $R>0$ is such that $\supp \varphi\subset B_R(0)$.\\
If $\beta\ge 2$ we obtain by similar arguments as above that
\begin{align*}
    \int\limits_{|z|>1} |z|^{\beta}\nu_{s(\varphi)}(dz)\le \|\varphi\|_{\infty}^{\beta}\|G_R\|_{L^{\beta}(\R^m)}^{\beta}  \int\limits_{\bR} |r|^{\beta} \nu(dr),
\end{align*}
which is indeed finite.\\
\noindent ii) Assume that $s$ has finite $\beta$-moment and that $G$ is different from $0$. So we know that there exists a function $\varphi \in \mathcal{D}(\bR^d)$  such that
\begin{align*}
\int_{\bR^d} |G(\varphi)(x)|\lambda^m(dx)>0,
\end{align*}
hence there exists an $r_0>1$ with
\begin{align*}
 \int_{|G(\varphi)|>1/r_0} |G(\varphi)(x)|^\beta\lambda^m(dx)>0.   
\end{align*}
We conclude
\begin{align*}
    \infty> \int\limits_{|z|>1} |z|^{\beta}\nu_{s(\varphi)}(dz)
    =&\int\limits_{\R} |r|^{\beta}\int\limits_{|G(\varphi)(x)|>\frac{1}{|r|}} |G(\varphi)(x)|^{\beta}\lambda^m(dx)\nu(dr)\\
    \ge& \int\limits_{|r|>r_0} |r|^{\beta}\int\limits_{|G(\varphi)(x)|>\frac{1}{|r|}} |G(\varphi)(x)|^{\beta}\lambda^m(dx)\nu(dr)\\
    \ge& \int\limits_{|r|>r_0} |r|^{\beta}\nu(dr)\int\limits_{|G(\varphi)(x)|>\frac{1}{|r_0|}} |G(\varphi)(x)|^{\beta}\lambda^m(dx),
\end{align*}
hence $\int\limits_{|r|>r_0} |r|^{\beta} \nu(dr)<\infty$ so that $\dot{L}$ has finite $\beta$-moment.
\end{proof}

\section{Second order elliptic partial differential equations  driven by L\'{e}vy white noise}

\subsection{Second order elliptic partial differential equations in divergence form driven by Levy white noise} 

In this section we discuss elliptic partial differential operators of second order with variable coefficients in divergence form, i.e. partial differential operators $p(x,D)$ of the form
\begin{align}\label{eqeli}
    p(x,D)u=-\sum\limits_{i,j=1}^d \partial_i (a_{ij}(x) \partial_j u) 
    =-\textrm{div} ((A(x)\nabla u),
\end{align}
where $A(x)=(a_{ij}(x))_{i,j=1}^d\in C^\infty(\bR^d,\bR^{d\times d})$ is a \emph{uniformly elliptic matrix}, i.e. there exists a $C>0$ such that
\begin{align*}
    C^{-1} \|\xi\|^2\le \xi^T A(x)\xi\le C\|\xi\|^2\textrm{ for all }\xi \in\bR^d.
\end{align*}
Now let $\dot{L}$ be a L\'{e}vy white noise on $\R^d$ with characteristic triplet $(a,\gamma,\nu)$ and $p(x,D)$ be a partial differential operator (PDO) of the form \eqref{eqeli}. 
We say that a generalized stochastic process $s:\mathcal{D}(\bR^d)\to L^0(\Omega)$ is a \emph{generalized solution} of the equation
\begin{align*}
    p(x,D) s=\dot{L},
\end{align*}
if it holds
\begin{align*}
    \langle s,p(x,D)^*\varphi\rangle =\langle \dot{L},\varphi\rangle \quad\textrm{for all }\varphi\in\mathcal{D}(\bR^d),
\end{align*}
where $p(x,D)^*$ is the adjoint of $p(x,D)$, i.e.
\begin{align*}
    p(x,D)^*u=-\sum\limits_{i,j=1}^d \partial_i (a_{ji}(x) \partial_j u).
\end{align*}
In the first theorem we derive sufficient conditions for the existence of such a solution in terms of the characteristic triplet $(a,\gamma,\nu)$, which is just a a simple extension of the Laplacian case. Afterwards we discuss stationarity of these generalized processes, e.g. if the coefficients are $y-$periodic for some $y\in\bR^d$, then $s$ is $y-$periodically stationary. We assume for the complete section that the coefficients of $p(x,D)$  are in $C^\infty(\bR^d)$. 
\begin{theorem}\label{ellivar}
Let $\dot{L}$ be a L\'{e}vy white noise on $\R^d$ with characteristic triplet $(a,\gamma,\nu)$ with vanishing first moment and $p(x,D)$ be a 
PDO of the form \eqref{eqeli}.
The stochastic partial differential equation
\begin{align}\label{ellicarma}
    p(x,D)s=\dot{L}
\end{align}
has a generalized solution $s:\mathcal{D}(\bR^d)\to L^0(\Omega)$, if $d\ge 5$ and
\begin{align*}
    \int_{|r|>1} |r|^{d/(d-2)}\nu(dr)<\infty.
\end{align*}
\end{theorem}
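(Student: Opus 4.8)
The plan is to represent the solution as an integral transform of $\dot L$ against the Green's function of $p(x,D)$ and then to invoke Example~\ref{example1} (equivalently Theorem~\ref{theo2}) to conclude that this transform defines a generalized random process. The first ingredient is the classical elliptic theory for divergence-form operators with smooth, uniformly elliptic coefficients (see [\ref{Littman}]): since $d\ge 5\ge 3$, the operator $p(x,D)$ admits a Green's function $\Gamma$ on $\R^d$, that is, for $f\in\mathcal{D}(\R^d)$ the decaying solution of $p(x,D)u=f$ is $u(x)=\int_{\R^d}\Gamma(x,y)f(y)\,\lambda^d(dy)$, one has $p(x,D)_x\Gamma(\cdot,y)=\delta_y$ in $\mathcal{D}'(\R^d)$ for every $y$, and $\Gamma$ obeys the pointwise bound $|\Gamma(x,y)|\le C\|x-y\|^{2-d}$ with $C=C(d,A)$.

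Next I would set $m:=d$ and define the kernel $G:\R^d\times\R^d\to\R$ by $G(x,y):=\Gamma(y,x)$, so that in the notation of Theorem~\ref{theo1} we have $(G(\varphi))(x)=\int_{\R^d}\Gamma(y,x)\varphi(y)\,\lambda^d(dy)$ and $|G(x,y)|\,\|x-y\|^{d-2}\le C$ for all $x\ne y$. Since $d\ge 5$ we have $\tfrac d2<d-2$ and $1<\tfrac d{d-2}=1+\tfrac2{d-2}<2$, so we may pick $q\in[1,\tfrac d{d-2})\subset[1,2)$ and take $p:=d-2$ and $w\equiv C$; then $\tfrac d2<p<\tfrac dq$ and $w\in L^{q^*}_{\mathrm{loc}}(\R^d)$, so Example~\ref{example1} applies to $G$. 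It gives $\|G_R\|_{L^2(\R^d)}<\infty$ for every $R>0$, shows that the vanishing of the first moment of $\dot L$ forces (\ref{g1}), and shows that the assumption $\int_{|r|>1}|r|^{d/(d-2)}\nu(dr)<\infty$ is exactly the condition making (\ref{g2}) hold, since here $\tfrac dp=\tfrac d{d-2}$. (Note that $G_R\notin L^1(\R^d)$ because the $\|x\|^{2-d}$ tail is not integrable, so Theorem~\ref{theo1} is unavailable and one genuinely needs Theorem~\ref{theo2}; this is why the vanishing-first-moment hypothesis appears.) Hence $s(\varphi):=\langle\dot L,G(\varphi)\rangle$, $\varphi\in\mathcal{D}(\R^d)$, is a well-defined generalized random process.

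It then remains to check that $s$ solves the equation in the generalized sense. Fix $\varphi\in\mathcal{D}(\R^d)$; since the $a_{ij}$ are smooth, $p(x,D)^*\varphi\in\mathcal{D}(\R^d)$, so by the definition of $s$ one has $\langle s,p(x,D)^*\varphi\rangle=\langle\dot L,G(p(x,D)^*\varphi)\rangle$. For each $z\in\R^d$, integration by parts (the defining property of the formal adjoint, legitimate because $\Gamma(\cdot,z)\in W^{1,1}_{\mathrm{loc}}(\R^d)\cap W^{1,2}_{\mathrm{loc}}(\R^d\setminus\{z\})$, $\varphi$ has compact support, and $\Gamma$ decays at infinity) together with $p(x,D)_x\Gamma(\cdot,z)=\delta_z$ yields
\[
(G(p(x,D)^*\varphi))(z)=\int_{\R^d}\Gamma(x,z)\,(p(x,D)^*\varphi)(x)\,\lambda^d(dx)=\langle p(x,D)_x\Gamma(\cdot,z),\varphi\rangle=\varphi(z).
\]
Thus $G(p(x,D)^*\varphi)=\varphi$, whence $\langle s,p(x,D)^*\varphi\rangle=\langle\dot L,\varphi\rangle$ for every $\varphi\in\mathcal{D}(\R^d)$, i.e. $s$ is a generalized solution of $p(x,D)s=\dot L$.

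The probabilistic content is essentially automatic once Example~\ref{example1} is available; the main obstacle is the PDE input. One must cite (and, on all of $\R^d$ rather than a bounded domain, assemble by exhaustion) the existence of a global Green's function for the possibly non-symmetric uniformly elliptic operator $p(x,D)$ with the sharp bound $|\Gamma(x,y)|\le C\|x-y\|^{2-d}$, and one must justify the identity $G(p(x,D)^*\varphi)=\varphi$ with care, since $\Gamma(\cdot,z)$ has a singularity at $z$ and only local Sobolev regularity there; this is cleanest to do by excising a ball $B_\varepsilon(z)$, integrating by parts on its complement, and letting $\varepsilon\to 0$. The restriction $d\ge 5$ and the moment condition $\int_{|r|>1}|r|^{d/(d-2)}\nu(dr)<\infty$ are then precisely what the kernel exponent $p=d-2$ forces through Example~\ref{example1}.
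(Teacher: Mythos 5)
Your proposal is correct and follows essentially the same route as the paper: take the kernel to be the fundamental solution of the (adjoint) operator, use the bound $|E(x,y)|\le C\|x-y\|^{2-d}$ from [\ref{Littman}] to apply Example \ref{example1} with $p=d-2$ (which forces $d\ge 5$ and the moment condition $\int_{|r|>1}|r|^{d/(d-2)}\nu(dr)<\infty$, and requires Theorem \ref{theo2} with vanishing first moment since the kernel is not in $L^1$), and then conclude from the left-inverse identity $E(p(x,D)^*\varphi)=\varphi$. The only cosmetic difference is that the paper cites [\ref{Littman}] directly for a left inverse of $p(x,D)^*$ with two-sided bounds, whereas you reconstruct it as the transpose of the Green's function of $p(x,D)$ and re-derive the identity by integration by parts.
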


\begin{proof}
By [\ref{Littman}, Chapter 10] there exists a locally integrable left inverse $E:\bR^d\times\bR^d\to \bR$ of the operator $p(x,D)^*$ such that for all $\varphi \in\mathcal{D}(\bR^d)$
\begin{align*}
    E(p(\cdot,D)^*\varphi)(x):=\int_{\bR^d}E(x,y)p(y,D)^*\varphi(y) dy=\varphi(x)\textrm{ for all }x\in\bR^d.
\end{align*}
Moreover, there exists an $N\in\bN$ such that
\begin{align*}
    N^{-1}\|x-y\|^{2-d}\le E(x,y)\le N\|x-y\|^{2-d}\textrm{ for all }x\neq y.
\end{align*}
We set 
\begin{align*}
\langle s,\varphi\rangle:= \langle \dot{L}, E (\varphi)\rangle
\end{align*}
and from Example $\ref{example1}$ with $w=1$, $p=d-2$ and $q=1$ (observe that $d\ge 5$) it follows that
\begin{align*}
s:\mathcal{D}(\bR^d)&\to L^0(\Omega),\\
\langle s,\varphi\rangle&:=\langle \dot{L},E(\varphi)\rangle,\quad\varphi\in\mathcal{D}(\bR^d),
\end{align*}
defines a generalized process. Moreover, $s$ is a solution of the equation \eqref{ellicarma}, as
\begin{align*}
\langle s,p(x,D)^*\varphi\rangle =\langle \dot{L},E(p(x,D)^*\varphi) \rangle=\langle \dot{L},\varphi \rangle
\end{align*}
for every $\varphi\in \mathcal{D}(\bR^d)$.
\end{proof}
The solution $s:\mathcal{D}(\bR^d)\to L^0(\Omega)$ is not unique, which is quite clear. For example, let $p(x,D)=-\Delta$ and define
\begin{align*}
    \langle s',\varphi\rangle:= \langle s,\varphi\rangle + \int\limits_{\bR^d} (x_1^2-x_2^2)\varphi(x) \lambda^d(dx),
\end{align*}
where $s$ is the solution constructed in Theorem \ref{ellivar} for the equation
\begin{align}\label{example}
    -\Delta s=\dot{L}.
\end{align}
Then it is easy to see that $s'$ is also a solution of \eqref{example}.

\begin{remark}\label{remark5.5}
We assumed that the coefficients of the partial differential operator $p(x,D)$ are infinitely often differentiable, but this is not necessary. It would be sufficient if $a_{ij}\in C^1(\bR^d)$ for all $i,j\in \{1,\dotso,d\}$.
\end{remark}

\begin{remark}\label{genrem}
The method above can also be used to find solutions of SDPEs of the form
\begin{align*}
-\textrm{div}(A\nabla u)+b\cdot \nabla u+Vu=\dot{L}
\end{align*}
under some suitable assumptions for the functions $A,\,b$ and $V$, as the fundamental solution $E$ of the elliptic operator above can be bounded from above by a constant times $\|x-y\|^{d-2}$ for all $x\neq y$. For a very general result see [\ref{Davey}]. Observe that in the most general case the fundamental solution solves the equation only in the weak sense. We will discuss in the next section what we understand under a weak solution.
\end{remark}

As a next step we discuss stationarity properties, which depend heavily on the matrix $(a_{ij}(x))_{i,j=1}^d$. For example, if $a_{ij}:\bR^d\to \bR$ is constant, it is easily seen that $E(x,y)=E(x-y)$ for all $x\neq y$ and hence we observe that the constructed solution $s:\mathcal{D}(\bR^d)\to L^0(\Omega)$ in Theorem \ref{ellivar} is stationary. 

\begin{definition}
A generalized process $s$ on $\mathcal{D}(\bR^d)$ is called \emph{periodic with period $l\in \R^d$}, if $s(\cdot+l)$ has the same law as $s$, and \emph{stationary} if $s$ is periodic for every period $l\in \R^d$. Here, $s(\cdot+l)$ is defined by
\begin{align*}
  \langle s(\cdot+l),\varphi\rangle:=\langle s, \varphi(\cdot-l)\rangle \textrm{ for every }\varphi\in\mathcal{D}(\R^d). 
\end{align*}
\end{definition}

\begin{remark}
Let $G:\R^m\times \R^d\to \R$ be a measurable function which fulfills the assumptions of Theorem \ref{theo1} with $m=d$. Assume that $G(x,y+l)= G(x+l,y)$ for all $x,y\in\R^d$ and for some $l\in\R^d$. Then it is easily seen that for $\varphi\in \mathcal{D}(\R^d)$
\begin{align*}
    \left( G \varphi(\cdot-l) \right)(x) = \int\limits_{\R^d} G(x,y) \varphi(y-l) \lambda^d(dy) = \left( G\varphi \right)(x+l),
\end{align*}
hence the generalized process $s$ defined in Theorem \ref{theo1} satisfies
\begin{align*}
    \langle s(\cdot+l), \varphi \rangle = \langle s,\varphi(\cdot-l) \rangle = \langle \dot{L}, G\varphi(\cdot-l) \rangle = \langle \dot{L}, \left(G\varphi  \right)(\cdot+l) \rangle = \langle \dot{L}(\cdot-l),G\varphi \rangle.
\end{align*}
Since $\dot{L} \overset{d}{=} \dot{L}(\cdot-l)$ it follows that in this case the process $s$ is periodic with period $l$. Observe that $(s(\varphi(\cdot+ly)))_{y\in \mathbb{Z}}$ is then a stationary process for all $\varphi\in \mathcal{D}(\R^d)$. Therefore, these models seem to be useful in statistics to model periodic processes or random fields. In the case that $G(x,y+l) = G(x+l,y)$ for all $l,x,y\in\R^d$, we see that $s$ will be stationary. 
\end{remark}

\begin{proposition}\label{propstat}
Let $p(x,D):\mathcal{D}(\bR^d)\to C(\bR^d)$ be an elliptic partial differential operator of the form \eqref{eqeli}, $d\ge 5$ and assume that the matrix-valued function $A:\bR^d\to \bR^{d\times d}$ is periodic with period $y\in\bR^d$, i.e. $A(x+y)=A(x)$ for all $x\in\bR^d$. Let $\dot{L}$ be a L\'{e}vy white noise such that it satisfies the assumption of Theorem \ref{ellivar}. Then there exists a solution $s:\mathcal{D}(\bR^d)\to L^0(\Omega)$ of $p(x,D)s=\dot{L}$, which is periodically stationary with period $y$.
\end{proposition}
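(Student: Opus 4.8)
I would take for $s$ the generalized solution constructed in the proof of Theorem~\ref{ellivar}, namely
\begin{align*}
\langle s,\varphi\rangle:=\langle\dot{L},E(\varphi)\rangle,\qquad E(\varphi)(x):=\int_{\R^d}E(x,z)\varphi(z)\,\lambda^d(dz),
\end{align*}
with $E:\R^d\times\R^d\to\R$ the fundamental solution of $p(\cdot,D)^*$ from [\ref{Littman}, Chapter~10]; by the hypothesis on $\dot{L}$ this is a well-defined generalized process solving $p(\cdot,D)s=\dot{L}$, and $E(\psi)\in D(\dot{L})$ for every $\psi\in\mathcal D(\R^d)$. The crux is to show that the $y$-periodicity of $A$ is inherited by $E$ in the form
\begin{align}\label{transinv}
E(x+y,z+y)=E(x,z)\qquad\text{for all }x\neq z.
\end{align}
Granting \eqref{transinv}, periodicity of $s$ follows by pushing the translation onto $\dot{L}$ and invoking the spatial stationarity of the white noise (this runs parallel to the Remark preceding the statement, with the shift acting on $\dot{L}$ in the opposite direction).

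Concretely, fix $\varphi\in\mathcal D(\R^d)$. Writing \eqref{transinv} as $E(x,w+y)=E(x-y,w)$ and substituting $w=z-y$,
\begin{align*}
E\big(\varphi(\cdot-y)\big)(x)=\int_{\R^d}E(x,z)\varphi(z-y)\,\lambda^d(dz)=\int_{\R^d}E(x-y,w)\varphi(w)\,\lambda^d(dw)=E(\varphi)(x-y),
\end{align*}
so $\langle s(\cdot+y),\varphi\rangle=\langle s,\varphi(\cdot-y)\rangle=\langle\dot{L},E(\varphi)(\cdot-y)\rangle$. Since $\varphi(\cdot-y)\in\mathcal D(\R^d)$ we have $E(\varphi)(\cdot-y)=E\big(\varphi(\cdot-y)\big)\in D(\dot{L})$, so the characteristic functional can be evaluated through the identity $\mathbb{E}\big[\exp(i\langle\dot{L},f\rangle)\big]=\exp\big(\int_{\R^d}\psi(f(x))\,\lambda^d(dx)\big)$, valid for every $f\in D(\dot{L})$ with $\psi$ the L\'evy exponent (see [\ref{Rajput}], [\ref{Fageot}]). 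Translation invariance of $\lambda^d$ then gives
\begin{align*}
\widehat{\mathcal P}_{s(\cdot+y)}(\varphi)=\exp\Big(\int_{\R^d}\psi\big(E(\varphi)(x-y)\big)\,\lambda^d(dx)\Big)=\exp\Big(\int_{\R^d}\psi\big(E(\varphi)(x)\big)\,\lambda^d(dx)\Big)=\widehat{\mathcal P}_{s}(\varphi),
\end{align*}
and since the characteristic functional determines the law, $s(\cdot+y)\eqd s$; hence $s$ is periodic with period $y$ (and, by iteration, with period $ky$ for every $k\in\bZ$).

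It remains to prove \eqref{transinv}, which I expect to be the only real difficulty. Because $A$ — hence each $a_{ij}$ and its first derivatives — is $y$-periodic, the product rule applied to $p(\cdot,D)^*u=-\sum_{i,j}\partial_i(a_{ji}(\cdot)\partial_j u)$ shows that $p(\cdot,D)^*$ commutes with the shift $u\mapsto u(\cdot+y)$. Combining this with the substitution $z\mapsto z+y$, one checks that
\begin{align*}
\int_{\R^d}E(x+y,z+y)\,[p(z,D)^*\varphi](z)\,\lambda^d(dz)=\varphi(x)\qquad\text{for all }\varphi\in\mathcal D(\R^d),
\end{align*}
so that $(x,z)\mapsto E(x+y,z+y)$ is again a locally integrable left inverse of $p(\cdot,D)^*$ obeying the same two-sided bound $N^{-1}\|x-z\|^{2-d}\le\,\cdot\,\le N\|x-z\|^{2-d}$ as $E$. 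Consequently $D_x:=E(x,\cdot)-E(x+y,\cdot+y)$ satisfies $\langle D_x,p(\cdot,D)^*\varphi\rangle=\varphi(x)-\varphi(x)=0$ for every $\varphi\in\mathcal D(\R^d)$, i.e.\ $p(\cdot,D)D_x=0$ in $\mathcal D'(\R^d)$; moreover $|D_x(z)|\le 2N\|x-z\|^{2-d}$, so $D_x\in L^1_{\mathrm{loc}}(\R^d)$ and $D_x(z)\to0$ as $\|z\|\to\infty$. By hypoellipticity of the uniformly elliptic operator with smooth coefficients $D_x$ is smooth, and the maximum principle for $-\mathrm{div}(A\nabla\,\cdot\,)$ together with the decay at infinity forces $D_x\equiv0$. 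This establishes \eqref{transinv}; the one external ingredient is the Green-function estimate of [\ref{Littman}] and the uniqueness implicit in it.
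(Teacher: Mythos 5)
Your proof is correct and follows essentially the same route as the paper: both arguments reduce periodicity of $s$ to a translation identity for the fundamental solution, prove that identity by observing that the two candidates solve the same uniformly elliptic equation, decay at infinity by the Littman--Stampacchia--Weinberger bounds, and hence coincide by the maximum principle, and then push the shift onto $\dot L$ using its stationarity. The only (harmless) difference is that you establish the pointwise kernel identity $E(x+y,z+y)=E(x,z)$, while the paper works one level up with the identity $E(\varphi(\cdot+y))(x)=E(\varphi)(x+y)$; you also spell out the characteristic-functional computation that the paper leaves implicit.
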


\begin{proof}
It is enough to show that
\begin{align*}
    E(\varphi(\cdot+y))(x)=E(\varphi)(x+y),
\end{align*}
where $E$ is again the fundamental solution of the operator $p(x,D)^*$. The assertion follows then from the stationarity of the L\'{e}vy white noise $\dot{L}$. We see that
\begin{align*}
    p(x,D)^*E(\varphi(\cdot+y))(x)=\varphi(x+y)
\end{align*}
and
\begin{align*}
    p(x,D)^*E(\varphi)(x+y)=p(x+y,D)^*E(\varphi)(x+y)=\varphi(x+y),
\end{align*}
so $ E(\varphi)(\cdot+y):\bR^d\to \bR$ and $E(\varphi(\cdot+y)):\bR^d\to \bR$ solve the same elliptic equation. By construction it holds
\begin{align}\label{1001}
   \lim\limits_{|x|\to\infty}
   |(E(\varphi)(x+y)-E(\varphi(\cdot+y))(x))|&=0\textrm{ and }\\
       \nonumber p(x,D)^*(E(\varphi)(x+y)-E(\varphi(\cdot+y))(x))&=0\textrm{ for all }x\in\bR^d.
\end{align}
where \eqref{1001} follows from \eqref{l2} and \eqref{1002}.
By the maximum principle for uniformly elliptic equations we obtain $E(\varphi)(x+y)-E(\varphi(\cdot+y))(x)=0$ for all $x\in\bR^d$, hence we obtain that $s:\mathcal{D}(\bR^d)\to L^0(\Omega)$ is periodically stationary.
\end{proof}
From this result we can construct a stationary process on a certain group as long as the coefficients of the partial differential operator satisfy some periodicity condition.
\begin{corollary}
    Let $(\mathcal{G},+)$ be a subgroup of $(\bR^d,+)$ and $p(x,D):\mathcal{D}(\bR^d)\to C(\bR^d)$ be an elliptic partial differential operator of the form \eqref{eqeli} and assume that the matrix-valued function $A:\bR^d\to \bR^{d\times d}$ is periodic with period $y\in\mathcal{G}$ for all $y\in \mathcal{G}$. Let $\dot{L}$ be a L\'{e}vy white noise satisfying the assumption of Theorem \ref{ellivar} and $s$ be the generalized solution of $p(x,D)s=\dot{L}$ constructed in Theorem \ref{ellivar}. Then for every $\varphi\in\mathcal{D}(\bR^d)$ the process
\begin{align*}
        (s_{\varphi}(y))_{y\in\mathcal{G}}:=(\langle s,\varphi(\cdot+y)\rangle)_{y\in \mathcal{G}}
\end{align*}
is a stationary process in $\mathcal{G}$.
\end{corollary}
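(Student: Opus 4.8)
The plan is to deduce stationarity of the $\mathcal{G}$-indexed process directly from the periodic stationarity of the generalized process $s$ established in Proposition \ref{propstat}. Recall that a process $(X_y)_{y\in\mathcal{G}}$ is stationary precisely when, for every $g\in\mathcal{G}$ and every finite family $y_1,\dots,y_n\in\mathcal{G}$, the random vectors $(X_{y_1+g},\dots,X_{y_n+g})$ and $(X_{y_1},\dots,X_{y_n})$ have the same law. So it suffices to fix such $g$ and $y_1,\dots,y_n$ and to compare these two vectors for $X_y=s_\varphi(y)=\langle s,\varphi(\cdot+y)\rangle$.

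First I would move the group shift off the test function and onto $s$: applying the definition $\langle s(\cdot+l),\psi\rangle=\langle s,\psi(\cdot-l)\rangle$ with $\psi=\varphi(\cdot+y_i)$ and $l=-g$ gives $s_\varphi(y_i+g)=\langle s,\varphi(\cdot+y_i+g)\rangle=\langle s(\cdot-g),\varphi(\cdot+y_i)\rangle$ for each $i$. Hence $(s_\varphi(y_i+g))_{i=1}^n=\Phi(s(\cdot-g))$ and $(s_\varphi(y_i))_{i=1}^n=\Phi(s)$, where $\Phi\colon\mathcal{D}'(\bR^d)\to\bR^n$, $\Phi(u):=(\langle u,\varphi(\cdot+y_i)\rangle)_{i=1}^n$, is measurable with respect to the cylindrical $\sigma$-field on $\mathcal{D}'(\bR^d)$.

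Next I would invoke Proposition \ref{propstat}. Since $\mathcal{G}$ is a subgroup of $(\bR^d,+)$ we have $-g\in\mathcal{G}$, so $A$ is periodic with period $-g$; by Proposition \ref{propstat} (whose proof, via the maximum-principle identity $E(\varphi(\cdot-g))(x)=E(\varphi)(x-g)$ together with the stationarity of $\dot L$, applies to the solution $s$ of Theorem \ref{ellivar}) the process $s$ is periodic with period $-g$, i.e. $s(\cdot-g)\overset{d}{=}s$ as generalized random processes. Applying the measurable map $\Phi$ to both sides yields $\Phi(s(\cdot-g))\overset{d}{=}\Phi(s)$, that is $(s_\varphi(y_i+g))_{i=1}^n\overset{d}{=}(s_\varphi(y_i))_{i=1}^n$. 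Since $g\in\mathcal{G}$, $n\in\bN$ and $y_1,\dots,y_n\in\mathcal{G}$ were arbitrary, $(s_\varphi(y))_{y\in\mathcal{G}}$ is stationary.

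I do not expect a genuine obstacle here, since all of the analytic work already sits in Proposition \ref{propstat}; the only points to watch are the sign conventions in the translation operators $s(\cdot+l)$ (and, implicitly, $\dot L(\cdot+l)$), the fact that it is precisely the subgroup hypothesis that makes $-g$ an admissible period of $A$ for every $g\in\mathcal{G}$, and the routine observation that equality in law of the $\mathcal{D}'(\bR^d)$-valued random variables $s(\cdot-g)$ and $s$ transfers through the fixed measurable functional $\Phi$ to the derived finite-dimensional distributions. It is also worth noting in passing that $s(\cdot-g)$ is again a generalized random process, so that Proposition \ref{propstat} genuinely applies to it.
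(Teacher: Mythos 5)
Your proposal is correct and follows the same route as the paper, which simply records the corollary as a direct consequence of Proposition \ref{propstat}; you have merely made explicit the (correct) bookkeeping of sign conventions, the use of the subgroup hypothesis to get $-g\in\mathcal{G}$, and the transfer of equality in law through the measurable finite-dimensional evaluation map. No gaps.
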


\begin{proof}
This is a direct consequence of Proposition \ref{propstat}.
\end{proof}

\subsection{The generalized and mild solutions of the electric Schr\"odinger equation driven by L\'evy white noise}
We saw in Remark \ref{genrem} before, that we can find generalized solutions of stochastic partial differential equations given by
\begin{align}
    -\textrm{div}(A(x)\nabla u)+V(x)u=\dot{L}\label{schroderEq}
\end{align}
for suitable $A$ and $V$ by assuming that the dimension $d\ge 5$, the first moment of the L\'{e}vy white noise vanishes and under the moment condition
\begin{align*}
\int\limits_{|r|>1} |r|^{d/(d-2)}\nu(dr)<\infty.
\end{align*}
In the case that $V$ lies in a Reverse H\"older class these assumptions seem to be not necessary. We show that we find generalized and mild solutions in dimension $3$ under much weaker conditions. At first we introduce the Reverse H\"older class $RH_p(\bR^d)$ and if $V$ is in this class, the moment assumption reduces to some kind of a logarithm moment condition (dependent on $V$), which is very similar to the case that $V$ is a positive constant. We first define what is meant by a mild solution of (\ref{schroderEq}).
\\
We call $E:\R^d\times\R^d\to\R$ a weak fundamental solution of the generalized electric Schr\"odinger operator
\begin{align*}
    -\textrm{div}(A(x)\nabla u) + V(x) u,
\end{align*}
if $E(\varphi):= \int\limits_{\R^d} E(x,y)\varphi(y)\lambda^d(dy)$ solves 
\begin{align*}
    -  \textrm{div}(A \nabla E(\varphi)) +VE(\varphi) = \varphi
\end{align*}
in the weak sense for all $\varphi\in\mathcal{D}(\R^d)$. 
We set $u(x):=\langle \dot{L}, E(x,\cdot)\rangle$ to be the \emph{mild solution} of \eqref{schroderEq}, if $u(x)$ exists for all $x\in\bR^d$, i.e. if $E(x,\cdot)\in  D(\dot{L})$ for all $x\in\R^d$. Then Theorem \ref{SchroderSol} i) will give a sufficient condition for that to hold.

In the following we define the maximum function $m$ and Agmon distance $\gamma$ of the potential $V$, to apply the estimates of the fundamental solution of the generalized electric Schr\"odinger operator shown in [\ref{Shen}] and [\ref{Mayboroda}].

\begin{definition}
Let $p\ge 1$. A function $\omega\in L^p_{loc}(\R^d)$ with $\omega>0$ a.e. belongs to the \emph{Reverse H\"older class} $RH_p(\R^d)$ if there exists a constant $C$ so that for any ball $B\subset \R^d$,
\begin{align*}
     \left( \frac{1}{\lambda^d(B)}\int\limits_B \omega(x)^p\lambda^d(dx)\right)^{1/p} \le \frac{C}{\lambda^d(B)} \int\limits_B \omega(x) \lambda^d(dx). 
\end{align*}
Furthermore, we define for $\omega\in RH_p(\R^d)$ the \emph{maximum function} $m(x,\omega)$ by
\begin{align*}
    \frac{1}{m(x,\omega)}:=\sup\left\{r>0:\frac{1}{r^{d-2}}\int\limits_{B(x,r)} \omega(y)dy\le 1\right\}\in (0,\infty)
\end{align*}
and the \emph{distance function}
\begin{align*}
    \gamma(x,y,\omega):= \inf_{\Gamma} \int_0^1 m(\Gamma(t),\omega)|\dot\Gamma(t)| \lambda^1(dt),
\end{align*}
where $\Gamma:[0,1]\to \bR^d$ is absolutely continuous and $\Gamma(0)=x$ and $\Gamma(1)=y$. Moreover, we define for $R>0$ the ball
\begin{align*}
    B^{\omega}(x,R):=\big\{y\in\R^d: \gamma(x,y,\omega)<R \big\}.
\end{align*}
\end{definition}
The set $RH_p(\R^d)$ is closely connected to the space of Muckenhoupt weights $A_s$, $s\ge 1$, where $\omega$ measurable and non-negative is in $A_s$ if
\begin{align*}
\sup_{B \textrm{ ball in }\R^d}\left(\frac{1}{\lambda^d(B)} \int\limits_B \omega(x) \lambda^d(dx)\right)\left(\frac{1}{\lambda^d(B)} \int\limits_B \omega(x)^{-s'/s} \lambda^d(dx)\right)^{s/s'}<\infty,
\end{align*}
where $s'\in \R$ such that $\frac{1}{s}+\frac{1}{s'}=1$. For further information see for example [\ref{Stein}]. Especially it holds that $\omega\in A_s$ for some $s\ge 1$ if and only if there exists a $p>1$ such that $\omega\in RH_p(\R^d)$. 
We see that the set of all positive and measurable functions bounded from above and strictly away from zero  given by
\begin{align*}
   \bigg\{ &f:\R^d\to(0,\infty):\exists C_1,\,C_2>0 \textrm{ such that } C_1 \le f(y)\le C_2  \textrm{ for all }y\in\bR^d \bigg\} 
\end{align*}
is a subset of $RH_p(\R^d)$ for all $p\ge 1$. 
We state now an existence theorem for a mild and generalized solution of the equation
\begin{align*}
    (-\textrm{div}(A\nabla)+V)s=\dot{L},
\end{align*}
where $V$ lies in $RH_{\frac{d}{2}}(\R^d)$ and show that under much weaker moment conditions there exists a generalized solution. We use that the weak fundamental solution $E$ of the operator $p(x,D)$ can be bounded as follows
\begin{align}\label{mayborodaeq}
|E(x,y)|\le C\frac{e^{-k\gamma(x,y,V)}}{\|x-y\|^{d-2}} \quad\textrm{ for all } x,y\in\R^d, x\neq y,
\end{align}
where $k,C>0$, see [\ref{Mayboroda}, Corollary 6.16, page 40]. From now on the constant $k>0$ is fixed and such that (\ref{mayborodaeq}) is satisfied.

\begin{theorem}\label{SchroderSol}
Let $A(x)=(a_{i,j}(x))_{i,j=1}^{d}$ be a real, uniformly bounded and elliptic matrix and $V\in RH_{\frac{d}{2}}(\bR^d)$. Let $\dot{L}$ be a a L\'evy white noise on $\R^d$ with characteristic triplet $(a,\gamma,\nu)$ such that it holds 
\begin{align*}
    \int\limits_{|r|>1} |r| \int\limits_{0}^{1/|r|} \lambda^d\left(B^{V}(0,- \frac{1}{k} \log(\alpha))\right) \lambda^1(d\alpha) \nu(dr)<\infty.
\end{align*}
i) If $d=3$ then there exists a mild solution of
\begin{align*}
     -\emph{div}(A\nabla u )+Vu = \dot{L},
\end{align*}
which is stochastically continuous.\\
ii) If $d\ge 3$ then there exists a generalized solution $s: \mathcal{D}(\R^d) \to L^0(\Omega) $ of 
\begin{align*}
	(-\emph{div}(A \nabla ) + V)s = \dot{L}. 
\end{align*}
iii) Under the assumption that the first moment of the L\'{e}vy white noise exists, the mild solution $u$ from i) gives rise to a generalized solution $s$ of the stochastic partial differential equation $(-\emph{div}(A\nabla) +V)s= \dot{L}$ via
\begin{align*}
    \langle s,\varphi \rangle := \int\limits_{\R^d} u(x)\varphi(x)\lambda^d(dx).
\end{align*}

\end{theorem}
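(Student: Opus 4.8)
The plan is to use the weak fundamental solution $E$ as the kernel fed into Theorems \ref{theo1} and \ref{theo2}, the only analytic input being the estimate \eqref{mayborodaeq} from [\ref{Mayboroda}], $|E(x,y)|\le Ce^{-k\gamma(x,y,V)}\|x-y\|^{2-d}$. Since the adjoint $p(x,D)^{*}=-\mathrm{div}(A^{T}\nabla)+V$ is again of the form \eqref{eqeli} with the same $V\in RH_{d/2}(\bR^{d})$, and the Agmon distance $\gamma(\cdot,\cdot,V)$ depends only on $V$, the kernel inverting $p(x,D)^{*}$ satisfies the same bound \eqref{mayborodaeq}; this is the kernel I use, still denoted $E$. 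Recall also that $B^{V}(0,R)$ is bounded, hence of finite Lebesgue measure, for every $R>0$ (a standard property of reverse-Hölder potentials, coming from $\gamma(0,x,V)\to\infty$ as $|x|\to\infty$).

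For part~ii) I would verify the hypotheses of Theorem \ref{theo1} with $m=d$ and $G=E$. For $x\in B_{R}(0)$ one has $E_{R}(x)\le C\int_{B_{2R}(x)}\|z\|^{2-d}\,\lambda^{d}(dz)=C'R^{2}<\infty$ because $2-d>-d$, so $E_{R}$ is bounded; and the triangle inequality $\gamma(x,y,V)\ge\gamma(0,x,V)-\gamma(0,y,V)\ge\gamma(0,x,V)-c_{R}$, with $c_{R}:=\sup_{y\in B_{R}(0)}\gamma(0,y,V)<\infty$, gives $E_{R}(x)\le C''e^{-k\gamma(0,x,V)}$ for $\|x\|>2R$, so $E_{R}$ decays faster than any polynomial and $E_{R}\in L^{1}(\bR^{d})\cap L^{2}(\bR^{d})$. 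For \eqref{ass1} I split $\{E_{R}>\alpha\}$ along the boundary of $B_{2R}(0)$: enlarging $C''$ so that $C''\ge1$, this gives $d_{E_{R}}(\alpha)\le\lambda^{d}(B_{2R}(0))+\lambda^{d}\!\big(B^{V}(0,-\tfrac1k\log\alpha+\tfrac1k\log C'')\big)$, hence $h_{R}(|r|)\le\lambda^{d}(B_{2R}(0))+|r|\int_{0}^{1/|r|}\lambda^{d}\!\big(B^{V}(0,-\tfrac1k\log\alpha+\tfrac1k\log C'')\big)\lambda^{1}(d\alpha)$; integrating against $\mathds{1}_{|r|>1}\nu(dr)$, the first term yields $\lambda^{d}(B_{2R}(0))\,\nu(\{|r|>1\})<\infty$, and the second, after the substitution $\beta=\alpha/C''$, is bounded by $C''$ times the standing hypothesis. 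Theorem \ref{theo1} then makes $s(\varphi):=\langle\dot L,E(\varphi)\rangle$ a generalized random process, and since $E$ inverts $p(x,D)^{*}$ we obtain $\langle s,p(x,D)^{*}\varphi\rangle=\langle\dot L,E(p(x,D)^{*}\varphi)\rangle=\langle\dot L,\varphi\rangle$, i.e. $s$ is a generalized solution.

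For part~i), with $d=3$, the goal is $E(x,\cdot)\in D(\dot L)$ for every fixed $x$, which by [\ref{Rajput}, Theorem 2.7] reduces to three integrability conditions. The dimension enters through $\|x-y\|^{2-d}=\|x-y\|^{-1}\in L^{2}_{loc}(\bR^{3})$ (which fails once $d\ge4$), so the diagonal singularity of $E(x,\cdot)$ is square-integrable and, with the exponential decay, $E(x,\cdot)\in L^{2}(\bR^{3})$; this settles the Gaussian part and the small-jump part of the criterion. For the large-jump part I bound, for fixed $x$, $d_{E(x,\cdot)}(\alpha)\le C\alpha^{-d/(d-2)}\wedge\lambda^{d}(\bR^{d})+\lambda^{d}\!\big(B^{V}(0,-\tfrac1k\log\alpha+c(x))\big)$ with $c(x)=\gamma(0,x,V)+C$ finite, again by the Agmon triangle inequality; for $|r|>1$ the first term is harmless and the second is controlled by the standing hypothesis since $c(x)$ is a fixed shift. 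Stochastic continuity of $x\mapsto u(x)$ then follows from off-diagonal continuity of $E(x,y)$ in $x$ together with \eqref{mayborodaeq} and dominated convergence, which give $\langle\dot L,E(x_{n},\cdot)-E(x,\cdot)\rangle\to0$ in probability whenever $x_{n}\to x$.

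For part~iii), assuming $\bE|\langle\dot L,\varphi\rangle|<\infty$, I would set $\langle s,\varphi\rangle:=\int_{\bR^{d}}u(x)\varphi(x)\,\lambda^{d}(dx)$ and justify the stochastic Fubini identity $\int u(x)\varphi(x)\,\lambda^{d}(dx)=\int\!\big(\int E(x,y)\varphi(x)\,\lambda^{d}(dx)\big)dL(y)=\langle\dot L,\widetilde E(\varphi)\rangle$, where $\widetilde E(\varphi)(y):=\int E(x,y)\varphi(x)\,\lambda^{d}(dx)$; the interchange is legitimate because the first moment is finite and $\int\!\int|E(x,y)|\,|\varphi(x)|\,\lambda^{d}(dx)\,\lambda^{d}(dy)<\infty$ by the $L^{1}$-bound on $E_{R}$ from part~ii). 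Since the fundamental solution plays a symmetric role in its two variables, $\widetilde E(\varphi)$ agrees with $E(\varphi)$ of part~ii) up to transposing the kernel, so $\langle s,p(x,D)^{*}\varphi\rangle=\langle\dot L,\varphi\rangle$ and $s$ is the generalized solution from~ii). The step I expect to be the main obstacle is the passage from the analytic bound \eqref{mayborodaeq} to the measure-theoretic hypotheses \eqref{ass1} (and its Rajput--Rosinski analogue for part~i): one must weigh the local singularity $\|x-y\|^{2-d}$ against the exponential Agmon factor precisely enough that $d_{E_{R}}$ and $d_{E(x,\cdot)}$ are dominated by constant multiples of $\lambda^{d}(B^{V}(0,\cdot))$ with exactly the logarithmic argument occurring in the hypothesis; making the stochastic Fubini of part~iii) rigorous under only a first-moment assumption is the second delicate point.
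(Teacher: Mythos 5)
Your argument follows the paper's own proof in all essentials: both rely on the Mayboroda--Poggi bound \eqref{mayborodaeq}, deduce $L^1\cap L^2$ membership and a distribution-function estimate of the form $d(\alpha)\le \mathrm{const}+\lambda^d\bigl(B^V(0,-\tfrac1k\log\alpha+\mathrm{shift})\bigr)$ from the Agmon triangle inequality, feed this into Theorem \ref{theo1} for ii) and into the Rajput--Rosinski criterion (i.e.\ Proposition \ref{mild}) for i), and obtain iii) by a stochastic Fubini argument, which the paper packages as Theorem \ref{mildGen}. The paper works with the transposed kernel $\tilde E(x,y)=E(y,x)$ where you invoke the fundamental solution of the adjoint directly; these are the same device.

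Two points need repair. First, in part i) your near-diagonal term is written as $C\alpha^{-d/(d-2)}\wedge\lambda^d(\R^d)$, which is just $C\alpha^{-3}$ for $d=3$ and is \emph{not} integrable over $(0,1/|r|)$ near $\alpha=0$; the cap must be the volume of the fixed ball used in the splitting. The paper avoids the issue by excising a ball about $x$ of radius $e^{-k\gamma(x,0,V)/(d-2)}C^{1/(d-2)}$, chosen independent of $\alpha$, so that the near-diagonal contribution to $d_{E(x,\cdot)}(\alpha)$ is simply a constant; your fixed-radius split works too once the minimum is taken with that finite volume rather than with $\lambda^d(\R^d)=\infty$. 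Second, in part iii) the condition legitimizing the interchange is $\int\int\int\min\bigl(|rE(x,y)\varphi(x)|,|rE(x,y)\varphi(x)|^2\bigr)\nu(dr)\,\lambda^d(dy)\,\lambda^d(dx)<\infty$, so besides $\int_K\|E(x,\cdot)\|_{L^1}\,\lambda^d(dx)<\infty$ you also need $\int_K\|E(x,\cdot)\|_{L^2}^2\,\lambda^d(dx)<\infty$ to control the small-jump part (the finite first moment takes care of $\int_{|r|>1}|r|\,\nu(dr)$); both bounds follow from your part i) estimates, but the $L^2$ half must be stated for the Fubini step to close. With these fixes the proposal coincides with the paper's proof.
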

We will prove Theorem \ref{SchroderSol} in Section \ref{secproof}. Here we will calculate the moment condition for $\dot{L}$ for functions which are greater than a positive constant.
\begin{example}
Let $d\ge3$ and $V\in RH_{\frac{d}{2}}(\R^d)$ such that $V>\varepsilon$, where $\varepsilon>0$. We observe that
\begin{align*}
    \int\limits_0^1 m(\Gamma(t),V ) |\dot{\Gamma}(t)| \lambda^1(dt) \ge C\sqrt{\varepsilon} \|y-x\|
\end{align*}
for every path $\Gamma:[0,1]\to\bR^d$ with $\Gamma(0)=x$ and $\Gamma(1)=y$ from which it follows for $0<\alpha\le 1$ that for fixed $k>0$
\begin{align*}
    \lambda^d\left( B^V(0,-\frac{1}{k} \log(\alpha)) \right) \le C_1  \left(\log\left(\frac{C}{\alpha}\right)\right)^d,
\end{align*}
where $C,C_1>0$. Since
\begin{align*}
    \int\limits_0^{1/r} \left(\log\left( \frac{1}{\alpha}\right) \right)^d \lambda^1(d\alpha) = \int\limits_{\log(r)}^{\infty} \beta^d e^{-\beta} \lambda^1(d\beta) =\Gamma(d+1,\log(r))= \frac{d!}{r} \sum\limits_{j=0}^d \frac{(\log( r))^j}{j!},
\end{align*}
where $\Gamma(d+1,\log(r))$ denotes the upper incomplete gamma function, this leads to
\begin{align*}
    &\int\limits_{|r|>1} |r| \int\limits_{0}^{1/|r|} \lambda^d\left(B^{V}(0,- \frac{1}{k} \log(\alpha))\right) \lambda^1(d\alpha) \nu(dr)\\
    \le&\int\limits_{|r|>1}  C_2\log(|r|)^d \nu(dr)+C_3\nu(\bR\setminus[-1,1]),
\end{align*}
where $C_2,C_3>0$.
So if we assume that the L\'evy white noise $\dot{L}$ with characteristic triplet $(a,\gamma,\nu)$ satisfies
\begin{align*}
    \int\limits_{|r|>1} \log(|r|)^d \nu(dr)<\infty
\end{align*}
then the assumptions of Theorem \ref{SchroderSol} are satisfied and we obtain generalized and mild solutions, if $d\ge 3$ or $d= 3$ respectively.
\end{example}
\subsection{Existence and continuity of mild solutions}
In the following we give sufficient conditions for the existence and continuity of a random field $u(x):=(\langle\dot{L},E(x,\cdot)\rangle)_{x\in \bR^m}$, where $E:\R^m\times \R^d\to\R$ is a kernel. This will be used in the proof of Theorem \ref{SchroderSol}, where $E$ is the weak fundamental solution of the generalized electric Schr\"odinger operator. 

\begin{proposition}\label{mild}
Let $\dot{L}$ be a L\'evy basis on $\R^d$ with characteristic triplet $(a,\gamma,\nu)$ and let $E:\R^m\times \R^d\to\R$ be a measurable function. We define for every $x\in\R^m$ a function $h_x:\bR^+\to \bR^+$ by
\begin{align*}
    h_x(r) &:=r \int\limits_0^{1/r} d_{E(x,\cdot)}(\alpha)\lambda^1(d\alpha)\textrm{ for }r>0.
\end{align*}
\begin{itemize}
    \item[i)] Assume that $E(x,\cdot)\in L^1(\R^d)\cap L^2(\R^d)$ for every $x\in\bR^m$ and
\begin{align*}
    \int\limits_{\R} \mathds{1}_{|r|>1} h_x(|r|)\nu(dr)<\infty
\end{align*}
for every $x\in\bR^m$. Then $E(x,\cdot)\in D(\dot{L})$ for every $x\in\bR^m$ and hence the random field $u=(u(x))_{x\in\bR^m}$ given by $u(x):= \langle \dot{L}, E(x,\cdot) \rangle$ for all $x\in \R^m$ exists.
    \item[ii)] Furthermore, if the function $T_E:\bR^m\to L^1(\bR^d)\cap L^2(\bR^d)$ given by $T_E(x):=E(x,\cdot)$ is continuous in $L^1(\R^d)$ and $L^2(\R^d)$ and for every $x\in \bR^m$ there exists an $\varepsilon>0$ such that
    \begin{align*}
    \sup_{x^*\in B_{{\varepsilon}}(x)}\int\limits_{\R} \mathds{1}_{|r|>1} h_{x^*}(|r|)\nu(dr)<\infty,
\end{align*}
then the process $u=(u(x))_{x\in\bR^m}$ is stochastically continuous.
\end{itemize}
\end{proposition}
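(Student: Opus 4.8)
The plan is to reduce both parts to the machinery already developed for Theorem \ref{theo1}, applied pointwise in $x\in\bR^m$ rather than uniformly over a test-function family. For part (i), fix $x\in\bR^m$. The hypothesis $E(x,\cdot)\in L^1(\bR^d)\cap L^2(\bR^d)$ together with $\int_{\bR}\mathds{1}_{|r|>1}h_x(|r|)\,\nu(dr)<\infty$ is exactly the pointwise analogue of the conditions on $G_R$ and $h_R$ in Theorem \ref{theo1}. By the Rajput--Rosinski criterion ([\ref{Rajput}, Theorem 2.6]) a measurable $f$ lies in $D(\dot L)$ iff the three integrability conditions (the analogues of \eqref{david1}, \eqref{david2}, \eqref{david3} with $G(\varphi_n)$ replaced by $f$) are finite; I would check these for $f=E(x,\cdot)$. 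The Gaussian term \eqref{david3} is controlled by $\|E(x,\cdot)\|_{L^2}^2<\infty$; the small-jump part of \eqref{david2} is controlled by $\int_{|r|\le1}|r|^2\nu(dr)\cdot\|E(x,\cdot)\|_{L^2}^2$ on the set where $|rE(x,\cdot)|\le1$ and by $\|E(x,\cdot)\|_{L^1}$-type bounds combined with $\nu(|r|\le1)$ otherwise; the $L^1$-drift part of \eqref{david1} is controlled by $|\gamma|\,\|E(x,\cdot)\|_{L^1}$; and the large-jump contributions in \eqref{david1} and \eqref{david2} are precisely what the assumption $\int_{|r|>1}h_x(|r|)\,\nu(dr)<\infty$ encodes, after rewriting $\int_{|E(x,\cdot)|>1/|r|}|E(x,y)|\,\lambda^d(dy)=|r|\int_0^{1/|r|}d_{E(x,\cdot)}(\alpha)\,\lambda^1(d\alpha)=h_x(|r|)$ via the layer-cake formula. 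Hence $E(x,\cdot)\in D(\dot L)$ and $u(x)=\langle\dot L,E(x,\cdot)\rangle$ is well defined; this is where I would simply cite the estimates in the proof of Theorem \ref{theo1} rather than redo them.

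For part (ii), stochastic continuity means: if $x_n\to x$ in $\bR^m$, then $u(x_n)\to u(x)$ in probability, i.e. $\langle\dot L,E(x_n,\cdot)-E(x,\cdot)\rangle\to0$ in probability. By [\ref{Fageot}, Theorem 3.10] this follows if $E(x_n,\cdot)-E(x,\cdot)\to0$ in $D(\dot L)$, which in turn follows from the same three conditions \eqref{david1}--\eqref{david3} with $G(\varphi_n)$ replaced by $g_n:=E(x_n,\cdot)-E(x,\cdot)$. The continuity of $T_E$ in $L^1$ and $L^2$ gives $\|g_n\|_{L^1}\to0$ and $\|g_n\|_{L^2}\to0$, which handles \eqref{david3}, the $L^1$-drift term of \eqref{david1}, and the small-jump part of \eqref{david2} exactly as in Theorem \ref{theo1}. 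The remaining large-jump terms require a uniform-in-$n$ domination: here I would use that eventually $x_n\in B_\varepsilon(x)$, and note the elementary pointwise bound $|g_n(y)|\le |E(x_n,y)|+|E(x,y)|$, so that for $\alpha>0$ one gets $d_{g_n}(\alpha)\le d_{E(x_n,\cdot)}(\alpha/2)+d_{E(x,\cdot)}(\alpha/2)$ and therefore $h_{g_n}(r)\le 2\bigl(h_{x_n}(2r)+h_x(2r)\bigr)$ (adjusting constants; the factor from the rescaling is absorbed since $h$ is built from an integral of a distribution function). Thus $\int_{|r|>1}\mathds{1}_{|r|>1}h_{g_n}(|r|)\,\nu(dr)$ is bounded uniformly in $n$ by $\sup_{x^*\in B_\varepsilon(x)}\int_{\bR}\mathds{1}_{|r|>1}h_{x^*}(|r|)\,\nu(dr)<\infty$. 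Combining the uniform integrable majorant on $\{|r|>1\}$ with the pointwise convergence $g_n\to0$ (hence $\mathds{1}_{|g_n(x)|>1/|r|}\to0$ and $g_n(x)\to0$ for a.e. $x$ and every $r$), dominated convergence forces the large-jump parts of \eqref{david1} and \eqref{david2} to $0$. Hence $g_n\to0$ in $D(\dot L)$ and $u$ is stochastically continuous.

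The main obstacle is the large-jump estimate in part (ii): unlike in Theorem \ref{theo1}, where a single bound $|G(\varphi_n)(x)|\le G_R(x)\|\varphi_n\|_\infty$ with $\|\varphi_n\|_\infty\to0$ gives both the vanishing and the domination for free, here the "smallness" (from $L^1$/$L^2$ continuity) and the "domination on $\{|r|>1\}$" (from the uniform $h_{x^*}$-bound) come from different hypotheses and must be married by hand. The key technical point is the distribution-function inequality $d_{g_n}(\alpha)\le d_{E(x_n,\cdot)}(\alpha/2)+d_{E(x,\cdot)}(\alpha/2)$ and the resulting control of $h_{g_n}$ by $h_{x_n}$ and $h_x$, which lets the uniform bound over $B_\varepsilon(x)$ do its job; once this is in place, dominated convergence finishes it routinely.
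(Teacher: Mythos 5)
Your plan follows essentially the same route as the paper's proof: part (i) is exactly the pointwise Rajput--Rosinski verification (the paper simply cites [\ref{Berger}, Proposition 5.2] for this), and part (ii) reduces stochastic continuity to the three integrability conditions for $E(x_n,\cdot)-E(x,\cdot)$, handling the drift, Gaussian and small-jump terms via $L^1$/$L^2$ continuity and the large-jump terms via the distribution-function inequality $d_{g_n}(\alpha)\le d_{E(x_n,\cdot)}(\alpha/2)+d_{E(x,\cdot)}(\alpha/2)$ together with the uniform bound on $B_\varepsilon(x)$ and dominated convergence, which is precisely the paper's argument (using [\ref{Grafakos}, Prop.\ 1.13 and 1.14]). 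You even isolate the same key technical point the paper relies on, so no substantive difference to report.
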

\begin{proof}
i) This is a direct consequence of [\ref{Berger}, Proposition 5.2].\\
ii) By [\ref{Rajput}, Theorem 2.7] we have to show that
\begin{align}
&\label{eas1}\int\limits_{\R^d} \big|\gamma \big(E(x_n,y)-E(x,y)\big) + \int\limits_{\R} r \big(E(x_n,y)-E(x,y)\big)\left(\mathds{1}_{|r(E(x_n,y)-E(x,y))|\le 1}-\mathds{1}_{|r|\le 1}\right)\nu(dr)\big|\lambda^d(dy)\to 0,\\
 &\label{eas2}\int\limits_{\R^d}  \int\limits_{\R} \min \big(1, \big|r \big(E(x_n,y)-E(x,y)\big)\big|^2\big)\nu(dr)\lambda^d(dy)\to 0\quad \textrm{ and }\\
a^2&\label{eas3}\int\limits_{\R^d}\big| \big(E(x_n,y)-E(x,y)\big) \big|^2\lambda^d(dy)\to 0
\end{align}
as $n\to\infty$, if $x_n\to x$ for $n\to \infty$. At first we observe that
\begin{align*}
    \int\limits_{\R^m} \big|\gamma \big(E(x_n,y)-E(x,y)\big)|\lambda^m(dy) \le |\gamma| \|E(x_n,\cdot)-E(x,\cdot)\|_{L^1(\R^d)}\to 0
\end{align*}
as $n\to\infty$. With similar calculations as in the proof of [\ref{Berger}, Theorem 3.4] we can estimate the remaining term in (\ref{eas1}) by
\begin{align*}
    &\int\limits_{\R^d}  \int\limits_{\R} \big|r \big(E(x_n,y)-E(x,y)\big)\left(\mathds{1}_{|r(E(x_n,y)-E(x,y))|\le 1}-\mathds{1}_{|r|\le 1}\right)\big|\nu(dr)\lambda^d(dy)\\
    =& \int\limits_{\R} |r|\mathds{1}_{|r|\le1} \int\limits_{\R^d} | E(x_n,y)-E(x,y)|\mathds{1}_{| E(x_n,y)-E(x,y)|> \frac{1}{|r|}}\lambda^d(dy)\nu(dr)\\
    +&\int\limits_{\R} |r|\mathds{1}_{|r|>1} \int\limits_{\R^d} | E(x_n,y)-E(x,y)|\mathds{1}_{| E(x_n,y)-E(x,y)|\le \frac{1}{|r|}}\lambda^d(dy)\nu(dr).
\end{align*}
As it holds
\begin{align*}
    \int\limits_{\R^d} | E(x_n,y)-E(x,y)|\mathds{1}_{| E(x_n,y)-E(x,y)|> \frac{1}{|r|}}\lambda^d(dy) \le |r| \|E(x_n,\cdot)-E(x,\cdot)\|_{L^2(\R^d)}^2,
\end{align*}
it follows from Lebesgue's dominated convergence theorem that
\begin{align*}
     \int\limits_{\R} |r|\mathds{1}_{|r|\le1} \int\limits_{\R^m} | E(x_n,y)-E(x,y)|\mathds{1}_{| E(x_n,y)-E(x,y)|> \frac{1}{|r|}}\lambda^m(dy)\nu(dr)\to 0
\end{align*}
as $n\to\infty$. For the last term in (\ref{eas1}) we observe by [\ref{Grafakos}, Prop. 1.13 and 1.14] that
\begin{align*}
     &\int\limits_{\R^d} | E(x_n,y)-E(x,y)|\mathds{1}_{| E(x_n,y)-E(x,y)|\le \frac{1}{|r|}}\lambda^d(dy)\\
     \le& \int\limits_{0}^{1/|r|} d_{|E(x_n,\cdot)-E(x,\cdot)|}(\alpha) \lambda^1(d \alpha)\\
     \le&\int\limits_{0}^{1/|r|} d_{|E(x_n,\cdot)|}(\alpha/2) \lambda^1(d \alpha)+\int\limits_{0}^{1/|r|} d_{|E(x,\cdot)|}(\alpha/2) \lambda^1(d \alpha)\\
     \le&2 \left(\int\limits_{0}^{\frac{1}{2|r|}} d_{|E(x_n,\cdot)|}(\alpha) \lambda^1(d \alpha)+\int\limits_{0}^{\frac{1}{2|r|}} d_{|E(x,\cdot)|}(\alpha) \lambda^1(d \alpha)\right).
\end{align*}
By Lebesgue's dominated convergence theorem we obtain that
\begin{align*}
    \int\limits_{\R} |r|\mathds{1}_{|r|>1} \int\limits_{\R^d} | E(x_n,y)-E(x,y)|\mathds{1}_{| E(x_n,y)-E(x,y)|\le \frac{1}{|r|}}\lambda^d(dy)\nu(dr)\to 0\textrm{ as }n\to\infty.
\end{align*}
So we showed (\ref{eas1}). In order to see (\ref{eas2}) observe that
\begin{align*}
    \int\limits_{\bR^d} \mathds{1}_{|r (E(x_n,y)-E(x,y))|>1}\lambda^d(dy)\le |r|\|E(x_n,\cdot)-E(x,\cdot)\|_{L^1(\bR^d)}\to 0\textrm{ as }n\to\infty
\end{align*}
and
\begin{align*}
    \int\limits_{\bR^d} \mathds{1}_{|r (E(x_n,y)-E(x,y))|>1}\lambda^d(dy)
    \le d_{|E(x_n,\cdot)|}\left(\frac{1}{2|r|}\right)+d_{|E(x,\cdot)|}\left(\frac{1}{2|r|}\right).
\end{align*}
Now by similar arguments as in the proof of [\ref{Berger}, Theorem 3.4] we see that (\ref{eas2}) holds true. Furthermore, it is clear that (\ref{eas3}) holds, since $T_E$ is continuous.
\end{proof}

Now we state under which conditions a mild solution of a stochastic partial differential equation gives rise to a generalized solution. 

\begin{theorem}\label{mildGen}
Let $\dot{L}$ be a L\'evy white noise on $\R^d$ with characteristic triplet $(a,\gamma,\nu)$ with existing first moment and $p(x,D)$ be a partial differential operator of the form
\begin{align*}
    p(x,D)\varphi(x)=- \emph{div} (A\nabla \varphi(x))+b(x)\cdot\nabla \varphi(x)+V(x)\varphi(x),
\end{align*}
where $b\in C^1(\R^d,\R^d)$ and $V\in L^1_{loc}(\R^d)$
such that there exists a weak fundamental solution $E:\bR^d\times\bR^d\to \bR$ of the equation $p(x,D)u =\delta_0$ with $E(x,\cdot)\in L^1(\bR^d)\cap L^2(\bR^d)\cap D(\dot{L})$ for all $x\in\bR^d$ and 
\begin{align*}
    \int\limits_{K} \|E(x,\cdot)\|_{L^p(\bR^d)}^p \lambda^d(dx)<\infty
\end{align*}
for all compact sets $K\subset \bR^d$ for $p=1,2$. Then the mild solution
\begin{align*}
    u(x) = \langle \dot{L}, E(x,\cdot) \rangle
\end{align*}
of $p(x,D) u =  \dot{L}$ gives rise to a generalized solution $s$ of the stochastic partial differential equation $p(x,D) s = \dot{L}$ via
\begin{align*}
    \langle s, \varphi \rangle := \int\limits_{\R^d} u(x)\varphi(x) \lambda^d(dx), \quad \varphi\in\mathcal{D}(\R^d).
\end{align*}
\end{theorem}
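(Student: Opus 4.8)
The plan is to verify three things: (a) $s(\varphi):=\int_{\R^d}u(x)\varphi(x)\,\lambda^d(dx)$ is a well-defined random variable for every $\varphi\in\mathcal{D}(\R^d)$; (b) the resulting map $s\colon\mathcal{D}(\R^d)\to L^0(\Omega)$ is linear and continuous, i.e. a generalized random process; and (c) $\langle s,p(x,D)^*\varphi\rangle=\langle\dot L,\varphi\rangle$ almost surely for every $\varphi\in\mathcal{D}(\R^d)$. The bulk of the work is a stochastic Fubini identity combined with the defining property of the weak fundamental solution.

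First I would establish a pointwise moment bound. Since $\dot L$ has finite first moment, $\int_{|r|>1}|r|\,\nu(dr)<\infty$; in particular, every $f\in L^1(\R^d)\cap L^2(\R^d)$ already belongs to $D(\dot L)$ by Proposition \ref{mild} i), because $h_f(t)=t\int_0^{1/t}d_f(\alpha)\,\lambda^1(d\alpha)\le t\,\|f\|_{L^1(\R^d)}$. Using the L\'evy--Khintchine triplet of $\langle\dot L,f\rangle$ from [\ref{Rajput}, Theorem 2.7] (Gaussian part $a^2\|f\|_{L^2(\R^d)}^2$, L\'evy measure $\nu_f(B)=\int_{\R^d}\int_{\R}\mathds 1_B(rf(x))\,\nu(dr)\lambda^d(dx)$, and the corresponding drift) together with exactly the splittings of the $r$-integral carried out in the proof of Theorem \ref{momentprop}, one obtains a bound of the shape
\[
\mathbb{E}|\langle\dot L,f\rangle|\le C\big(\|f\|_{L^1(\R^d)}+\|f\|_{L^1(\R^d)}^{1/2}+\|f\|_{L^2(\R^d)}+\|f\|_{L^2(\R^d)}^{2}\big),\qquad C=C(a,\gamma,\nu).
\]
Applying this with $f=E(x,\cdot)$, integrating over a compact $K$, and using H\"older's inequality with the hypotheses $\int_K\|E(x,\cdot)\|_{L^p(\R^d)}^p\,\lambda^d(dx)<\infty$ for $p=1,2$ gives $\int_K\mathbb{E}|u(x)|\,\lambda^d(dx)<\infty$. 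Fixing a jointly $(x,\omega)$-measurable modification of $u$ (which exists since $x\mapsto E(x,\cdot)$ is strongly measurable into $L^1(\R^d)\cap L^2(\R^d)$ and, by the displayed bound, $f\mapsto\langle\dot L,f\rangle$ is continuous there), one deduces $\int_K|u(x)|\,\lambda^d(dx)<\infty$ almost surely for every compact $K$. Hence $s(\varphi)$ is well-defined with $|s(\varphi)|\le\|\varphi\|_\infty\int_{\operatorname{supp}\varphi}|u|$; linearity is immediate, and if $\varphi_n\to0$ in $\mathcal{D}(\R^d)$ then all $\varphi_n$ live in one compact $K$ with $\|\varphi_n\|_\infty\to0$, so $|s(\varphi_n)|\le\|\varphi_n\|_\infty\int_K|u|\to0$ almost surely, proving (a) and (b).

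Next I would prove the stochastic Fubini identity: for every bounded measurable $\chi$ with compact support,
\[
\int_{\R^d}u(x)\chi(x)\,\lambda^d(dx)=\Big\langle\dot L,\ y\mapsto\int_{\R^d}E(x,y)\chi(x)\,\lambda^d(dx)\Big\rangle\quad\text{a.s.}
\]
Writing $g_\chi(y):=\int E(x,y)\chi(x)\,\lambda^d(dx)$, Minkowski's integral inequality gives $\|g_\chi\|_{L^p(\R^d)}\le\int|\chi(x)|\,\|E(x,\cdot)\|_{L^p(\R^d)}\,\lambda^d(dx)<\infty$ for $p=1,2$, so $g_\chi\in L^1(\R^d)\cap L^2(\R^d)\subset D(\dot L)$. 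Viewing $x\mapsto E(x,\cdot)\chi(x)$ as a Bochner-integrable map into $X:=L^1(\R^d)\cap L^2(\R^d)$, I approximate it by $X$-valued simple functions converging to it in $X$ for $\lambda^d$-a.e. $x$ and dominated; for simple integrands the identity is trivial by linearity of $\langle\dot L,\cdot\rangle$, and it passes to the limit because $f\mapsto\langle\dot L,f\rangle$ is continuous from $X$ into $L^1(\Omega)$ (the bound above gives $\|\langle\dot L,f\rangle\|_{L^1(\Omega)}\lesssim\|f\|_X^{1/2}$ for small $\|f\|_X$) together with dominated convergence for the $L^1(\Omega)$-valued Bochner integral. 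Then, for fixed $\varphi\in\mathcal{D}(\R^d)$, taking $\chi:=p(x,D)^*\varphi$ and unwinding the definition of a weak fundamental solution (the weak formulation of $-\textrm{div}(A\nabla E(\varphi))+b\cdot\nabla E(\varphi)+VE(\varphi)=\varphi$, with the derivatives transferred onto the test function using the regularity of $A$ and $b$, followed by a deterministic Fubini) yields $g_{p^*\varphi}(y)=\int E(x,y)\,(p(x,D)^*\varphi)(x)\,\lambda^d(dx)=\varphi(y)$ for $\lambda^d$-a.e. $y$. Substituting into the Fubini identity gives $\langle s,p(x,D)^*\varphi\rangle=\int u(x)(p^*\varphi)(x)\,\lambda^d(dx)=\langle\dot L,g_{p^*\varphi}\rangle=\langle\dot L,\varphi\rangle$, which is (c).

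The hard part is making the Fubini step legitimate when the integrand $\chi=p(x,D)^*\varphi$ is only in $L^1$ with compact support: because $V\in L^1_{loc}(\R^d)$ the term $V\varphi$ is unbounded in general, so one cannot work in a fixed $L^\infty$-class. One must check that $x\mapsto u(x)\chi(x)$ is $L^1(\Omega)$-integrable over $\operatorname{supp}\varphi$ — equivalently that $\int_{\operatorname{supp}\varphi}\|E(x,\cdot)\|_{L^1(\R^d)}\,|V(x)\varphi(x)|\,\lambda^d(dx)<\infty$ and the analogous estimate with the $L^2$-norm — which is precisely where the local integrability of $E$ assumed in the theorem has to interact with $V\in L^1_{loc}$ (it does, for instance, under the bound (\ref{mayborodaeq}), where $\|E(x,\cdot)\|_{L^1(\R^d)}$ and $\|E(x,\cdot)\|_{L^2(\R^d)}$ are locally bounded for $d=3$), and the dominated-convergence step inside the Fubini argument must be run against the dominating function $|V\varphi|\,\|E(x,\cdot)\|_X^{1/2}$. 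A secondary technical point worth care is the existence of a jointly $(x,\omega)$-measurable version of $u$, which is needed even to write down $\int u(x)\varphi(x)\,\lambda^d(dx)$, and the precise meaning of $\langle s,p(x,D)^*\varphi\rangle$ when $A$ is merely bounded (one then keeps the weak bilinear-form formulation rather than the formal differential expression).
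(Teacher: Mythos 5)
Your proposal is correct in outline and follows the same two-step strategy as the paper: a stochastic Fubini identity combined with the weak-fundamental-solution property $\int_{\R^d}E(x,y)\,p(x,D)^*\varphi(x)\,\lambda^d(dx)=\varphi(y)$ a.e. The difference lies in how the Fubini step is justified. The paper verifies the Rajput--Rosinski-type integrability condition
\begin{align*}
\int\limits_{\R^d}\int\limits_{\R^d}\int\limits_{\R}\min\left(|rE(x,y)\varphi(x)|,|rE(x,y)\varphi(x)|^2\right)\nu(dr)\,\lambda^d(dy)\,\lambda^d(dx)<\infty
\end{align*}
by splitting into $|r|>1$ (controlled by the first moment and $\int_K\|E(x,\cdot)\|_{L^1}\,\lambda^d(dx)$) and $|r|\le1$ (controlled by $\int_K\|E(x,\cdot)\|_{L^2}^2\,\lambda^d(dx)$), and then invokes the ready-made stochastic Fubini theorem of [\ref{Barndorff}, Theorem 3.1]; you instead re-prove the interchange by hand, approximating $x\mapsto E(x,\cdot)\chi(x)$ by simple functions in $L^1(\R^d)\cap L^2(\R^d)$ and passing to the limit using an $L^1(\Omega)$-continuity bound for $f\mapsto\langle\dot L,f\rangle$. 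Both are legitimate; the citation route is shorter, while your route makes explicit the first-moment bound $\mathbb{E}|\langle\dot L,f\rangle|\lesssim\|f\|_{L^1}+\|f\|_{L^2}+\dots$, which also gives you local Bochner integrability of $u$ and hence the well-definedness and continuity of $s$ more directly than the paper (which leaves these as ``clear''). One point in your favour: you correctly flag that the Fubini identity must ultimately be applied to $\chi=p(x,D)^*\varphi$, whose zero-order part $V\varphi$ need not be bounded when $V\in L^1_{loc}(\R^d)$ only, so the $\|\varphi\|_\infty$-based verification does not literally cover it; the paper's proof of \eqref{eq669} is written for $\varphi$ itself and silently reuses it for $p(x,D)^*\varphi$. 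Your observation that one additionally needs $\int_K\|E(x,\cdot)\|_{L^1(\R^d)}|V(x)|\,\lambda^d(dx)<\infty$ (satisfied in the intended application via \eqref{mayborodaeq}, where $\|E(x,\cdot)\|_{L^1}$ is locally bounded) is a genuine refinement, not a defect of your argument.
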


\begin{proof}
We want to apply a stochastic Fubini theorem. Therefore we have to show that
\begin{align}\label{eq669}
    \int\limits_{\R^d}\int\limits_{\R^d}\int\limits_{\R} \min\left( |r E(x,y)\varphi(x)|,|r E(x,y)\varphi(x)|^2  \right)\nu(dr)  \lambda^d(dy) \lambda^d(dx) <\infty.
\end{align}
With similar calculations as done in the proof of [\ref{Berger}, Proposition 5.6] we get that for every $\varphi\in\mathcal{D}(\R^d)$
\begin{align*}
    \min\left( |r E(x,y)\varphi(x)|,|r E(x,y)\varphi(x)|^2  \right)\le \mathds{1}_{|r|>1} |r E(x,y)\varphi(x)| + \mathds{1}_{|r|\le 1} |r E(x,y)\varphi(x)|^2.
\end{align*}
Let $\varphi \in \mathcal{D}(\bR^d)$ such that $\supp \varphi\subset B_R(0)$, $R>0$. We observe that 
\begin{align*}
    &\int\limits_{\R^d}\int\limits_{\R^d}\int\limits_{\R} \mathds{1}_{|r|>1} |r E(x,y)\varphi(x)| \nu(dr)  \lambda^d(dy) \lambda^d(dx)\\
    \le& \| \varphi \|_{\infty} \int\limits_{\R} \mathds{1}_{|r|>1} |r| \nu(dr)   \int\limits_{B_R(0)} \| E(x,\cdot) \|_{L^1(\bR^d)} \lambda^d(dx)<\infty
\intertext{ and }
    & \int\limits_{\R^d}\int\limits_{\R^d}\int\limits_{\R} \mathds{1}_{|r|\le 1} |r E(x,y)\varphi(x)|^2 \nu(dr)  \lambda^d(dy) \lambda^d(dx)\\
    \le&  \|\varphi\|^2\int\limits_{\R} \mathds{1}_{|r|\le 1} |r|^2 \nu(dr)   \int\limits_{B_R(0)} \| E(x,\cdot) \|_{L^2(\bR^d)}^2 \lambda^d(dx)<\infty.
\end{align*}
This shows \eqref{eq669}. Since $\varphi\in\mathcal{D}(\R^d)$ has compact support and we have that $\lambda^d$ is finite on the support of $\varphi$ and with [\ref{Barndorff}, Theorem 3.1 p. 926] we get that
\begin{align*}
    \langle s,\varphi \rangle :&= \int\limits_{\R^d} u(x) \varphi(x) \lambda^d(dx) = \int\limits_{\R^d} \int\limits_{\R^d} E(x,y) \varphi(x) dL(y) \lambda^d(dx) \\
    &= \int\limits_{\R^d}\int\limits_{\R^d} E(x,y) \varphi(x) \lambda^d(dx) dL(y) \quad \textrm{ a.s.}
\end{align*}
and further it can be chosen a version of $u$ such that $u(t)\varphi(t)$ is integrable with respect to $\lambda^d$. The linearity of $s:\mathcal{D}(\R^d)\to L^0(\Omega)$ is clear and the estimates above show that it is also continuous, hence $s$ is a generalized random process. In order to see that $p(x,D)s =\dot{L}$, we observe that for arbitrary $f\in\mathcal{D}(\bR^d)$
\begin{align*}
&\int\limits_{\R^d} \left(\,\,\int\limits_{\R^d} E(x,y) p(x,D)^* \varphi(x) \lambda^d(dx)\right) f(y) \lambda^d(dy)\\
=&
    -\int\limits_{\R^d}\left(\,\,\int\limits_{\R^d} E(x,y) f(y) \lambda^d(dy)\right) (\textrm{div}(A^T(x)\nabla \varphi(x)) \lambda^d(dx)\\
    &-\int\limits_{\R^d}\left(\,\,\int\limits_{\R^d} E(x,y) f(y) \lambda^d(dy)\right) \nabla\cdot (b(x) \varphi(x)) \lambda^d(dx)\\
    &+\int\limits_{\R^d}\left(\,\,\int\limits_{\R^d} E(x,y) f(y) \lambda^d(dy)\right) V(x) \varphi(x) \lambda^d(dx)\\
    =&   \int\limits_{\R^d} \langle A(x)\nabla \left(\,\, \int\limits_{\R^d} E(x,y) f(y) \lambda^d(dy)\right), \nabla\varphi(x)\rangle\lambda^d(dx) \\
    &+\int\limits_{\R^d}  (b(x)\cdot\nabla+V(x)) \left(\,\, \int\limits_{\R^d} E(x,y) f(y) \lambda^d(dy)\right) \varphi(x)\lambda^d(dx)
    =& \int\limits_{\R^d} f(x)\varphi(x)\lambda^d(dx).
\end{align*}
As $f\in \mathcal{D}(\bR^d)$ was arbitrary, it follows from the fundamental lemma of calculus of variations that
\begin{align*}
    \int\limits_{\R^d} E(x,y) p(x,D)^* \varphi(x) \lambda^d(dx)=\varphi(y)\textrm{ a.e.}
\end{align*}
Now we obtain
\begin{align*}
    \langle s,p(x,D)^*\varphi\rangle &= \int\limits_{\R^d} \int\limits_{\R^d} E(x,y) p(x,D)^* \varphi(x) dL(y) \lambda^d(dx)= \int\limits_{\R^d} \varphi(y)  dL(y) = \langle \dot{L}, \varphi \rangle,
\end{align*}
so we see that $s$ is a generalized solution.
\end{proof}

\subsection{Proof of Theorem \ref{SchroderSol}}\label{secproof}
\begin{proof}
i) Similar to [\ref{Shen}, Remark 3.21] we observe that we can estimate the distance function $\gamma$ in (\ref{mayborodaeq}) and obtain for the weak fundamental solution $E$ of the generalized electric Schr\"odinger operator $p(x,D)$ that it holds
\begin{align}
    |E(x,y)| \le C_1 e^{-C_2 (1+m(x,V)\|x-y\|)^{\theta}}\label{sheneq} \|x-y\|^{2-d},
\end{align}
for some constants $C_1, C_2>0$ and $0<\theta<1$. Hence, we obtain that
\begin{align*}
    \int\limits_{\R^d} |E(x,y)| \lambda^d(dy) &\le C_1 \int\limits_{\R^d} e^{-C_2(1+m(x,V) \| z\|)^{\theta}} \|z\|^{2-d} \lambda^d(dz) \\
    &= C_1\int\limits_{0}^{\infty} r e^{-C_2(1+m(x,V)  r)^{\theta}} \lambda^1(dr)  < \infty
\intertext{and also}
    \int\limits_{\R^d} |E(x,y)|^2 \lambda^d(dy) &\le  C_3 \int\limits_{0}^{\infty} r^{3-d} e^{-2C_2(1+m(x,V)  r)^{\theta}} \lambda^1(dr) < \infty,
\end{align*}
where $C_3>0$. For $\alpha>0$ and $x,y\in\R^d$, $x\neq y$ it follows with the triangular inequality that (observe that $d=3$ and hence the Lebesgue measure of a ball with radius $r$ is $\frac{4\pi}{3}$)
\begin{align*}
    &\lambda^d\left( \{ y\in\R^d: \frac{Ce^{-k\gamma(x,y,V)}}{\|x-y\|^{d-2}}> \alpha \} \right) \\
    \le &  \lambda^d\left( \{ y\in\R^d \setminus B_{e^{-k\gamma(x,0,V)/(d-2)}C^{1/(d-2)}}(x) :  \frac{e^{-k \gamma(0,y,V)}}{||x-y||^{d-2}}> e^{k\gamma(x,0,V)}\alpha/C \} \right) + \frac{4\pi}{3}(e^{-k\gamma(x,0,V)/(d-2)}C^{1/(d-2)})^d \\
    \le &  \lambda^d\left( \{ y\in\R^d\setminus B_{e^{-k\gamma(x,0,V)/(d-2)}C^{1/(d-2)}}(x): d(0,y,V)\le - \frac{1}{k} \log(\alpha )\right) + \frac{4\pi}{3} (e^{-k\gamma(x,0,V)/(d-2)}C^{1/(d-2)})^d \\
    \le& \lambda^d\left(B^{V}(0,- \frac{1}{k} \log(\alpha ))\right) + \frac{4\pi}{3}  (e^{-k\gamma(x,0,V)/(d-2)}C^{1/(d-2)})^d .
\end{align*}
It follows with (\ref{mayborodaeq}) that
\begin{align*}
    &\int\limits_{\R} |r| \mathds{1}_{|r|>1} \int\limits_{0}^{1/|r|} d_{E(x,\cdot)}(\alpha) \lambda^1(d\alpha) \nu(dr) \\
    \le& C_4(x)\left(1+ \int\limits_{|r|>1} |r| \int\limits_{0}^{1/|r|} \lambda^d\left(B^{V}(0,- \frac{1}{k} \log(\alpha))\right)\lambda^1(d\alpha) \nu(dr)\right)<\infty
\end{align*}
by assumption, where $0<C_4(x)<\infty$. Proposition \ref{mild} i) now gives the existence of a mild solution.\\ 
To show the continuity of the mild solution by the previous estimates and Proposition \ref{mild} ii) it is sufficient to prove that $T_E:\bR^d\to L^1(\bR^d)\cap L^2(\bR^d)$, $T_E(x)(\cdot)=E(x,\cdot)$, is continuous. Let $x_0\in\bR^d$ and $(x_n)_{n\in\bN}$ be a sequence such that $x_n\to x_0$ as $n\to\infty$. Let $0<2\|x_0-x_n\|<r_0$ for all $n\ge M$, $M\in\bN$. We calculate that
\begin{align*}
\|E(x_0,\cdot)-E(x_n,\cdot)\|_{L^1(\bR^d)}\le \|E(x_0,\cdot)-E(x_n,\cdot)\|_{L^1(B_{r_0}(x))}+\|E(x_0,\cdot)-E(x_n,\cdot)\|_{L^1(\bR^d\setminus B_{r_0}(x))}.
\end{align*}
It was shown in [\ref{Mayboroda}, Lemma 3.12, page 14] that it holds for a constant $0< \kappa<1$
\begin{align}
	m(x,V) \ge C \frac{m(0,V)}{(1+\| x\|m(0,V))^{\kappa}}\textrm{ for all }x\in\bR^d \label{maxfunction},
\end{align}
hence there exists an $\varepsilon>0$ such that it follows with (\ref{sheneq}) that
\begin{align*}
|E(x_n,y)|\le C_1 e^{-C_2(1+\varepsilon\|x_n-y\|)^\theta}\|x_n-y\|^{2-d}
\end{align*}
for every $n\in \bN_0$. Therefore, we obtain that
\begin{align*}
   \|E(x_0,\cdot)-E(x_n,\cdot)\|_{L^1(B_{r_0}(x))}\le 2\int_{B_{2r_0}(0)}  C_1 e^{-C_2(1+\varepsilon\|y\|)^\theta}\|y\|^{2-d}\lambda^d(dy).
\end{align*}
and
\begin{align*}
    |E(x_0,y)-E(x_n,y)|\le C_1e^{-C_2(1+\varepsilon\|x_n-y\|)^\theta}\|x_n-y\|^{2-d}+C_1e^{-C_2(1+\varepsilon\|x_0-y\|)^\theta}\|x_0-y\|^{2-d}.
\end{align*}
As $(x_n)_{n\ge M}$ is bounded we  can find an integrable majorant on $\bR^d\setminus B_{r_0}(x)$. We know from [\ref{Mayboroda}, chapter 7] that $E$ is continuous and by Lebesgue's Dominated Convergence Theorem we obtain
\begin{align*}
\lim_{n\to\infty}\|E(x_0,\cdot)-E(x_n,\cdot)\|_{L^1(\bR^d\setminus B_{r_0}(x))}=0.
\end{align*}
We see that
\begin{align*}
    \lim_{n\to\infty} \|E(x_0,\cdot)-E(x_n,\cdot)\|_{L^1(\bR^d)}\le 2\int_{B_{2r_0}(0)}  C_1 e^{-C_2(1+\varepsilon\|y\|)^\theta}\|y\|^{2-d}\lambda^d(dy).
\end{align*}
By letting $r_0$ go to $0$ we obtain that $\lim_{n\to\infty} \|E(x_0,\cdot)-E(x_n,\cdot)\|_{L^1(\bR^d)}=0$.
The same proof works for the $L^2$-norm.\\
ii) Let $\tilde{E}$ be the left inverse of $p(x,D)^*$, i.e it holds 
\begin{align*}
    \int\limits_{\R^d} \tilde{E}(x,y) p(y,D)^* \varphi(y) \lambda^d(dy)=\varphi(x)
\end{align*}
for $\varphi\in\mathcal{D}(\R^d)$. We have to show that $\tilde{E}_R\in L^1(\R^d)\cap L^2(\R^d)$ in order to satisfy the assumptions of Theorem \ref{theo1}. As $\tilde{E}(x,y)=E(y,x)$ we can show by a similar argument as in i) that for $R>0$
\begin{align*}
    \tilde{E}_R(x)= \int\limits_{B_R(0)} |\tilde{E}(x,y)| \lambda^d(dy) \le \int\limits_{B_R(0)}C_1 e^{-C_2 (1+m(y,V)\|x-y\|)^{\theta}} \|x-y\|^{2-d}\lambda^d(dy).
\end{align*}
By using (\ref{maxfunction}) we obtain that
\begin{align*}
    \tilde{E}_R(x) &\le C_R  \int\limits_{B_R(0)}e^{-k C^1_R\|x-y\|^{\theta} }\|x-y\|^{2-d}\lambda^d(dy)\le \tilde{C}_Re^{-k C^1_R\|x\|^{\theta} }\|x\|^{2-d},
\end{align*}
where $C_R, C_R^1,\tilde{C}_R>0$. Therefore we obtain that $\| \tilde{E}_R \|_{L^1(\R^d)}+ \| \tilde{E}_R \|_{L^2(\R^d)}<\infty$. We observe from (\ref{mayborodaeq}) and [\ref{Shen}, Remark 3.21] by applying the triangular inequality that
\begin{align*}
    \tilde{E}_R(x) &\le e^{-k\gamma(x,0,V)} \int\limits_{B_R(0)} \frac{e^{k\gamma(y,0,V)}}{\|x-y\|^{d-2}} \lambda^d(dy)\le  C'_R e^{-k\gamma(x,0,V)} \int\limits_{B_{R}(x)} \frac{1}{\|y\|^{d-2}}\lambda^d(dy)\le C''_{R} \frac{e^{-k\gamma(x,0,V)}}{\|x\|^{d-2}}, 
\end{align*}
where $C'_R,C''_{R}>0$ are constants dependent on $R$. This leads with similar arguments as in i) to
\begin{align*}
   \int\limits_{\R} |r| \mathds{1}_{|r|>1} \int\limits_0^{1/|r|} d_{\tilde{E}_R}(\alpha) \lambda^1(d\alpha)\nu(dr) \le C_R\left(1+ \int\limits_{|r|>1} |r| \int\limits_{0}^{1/|r|} \lambda^d\left(B^{V}(0,- \frac{1}{k} \log(\alpha))\right) \lambda^1(d\alpha) \nu(dr)\right)<\infty,
\end{align*}
for a constant $C_R>0$ dependent on $R>0$. With Theorem \ref{theo1} follows the existence of a generalized solution $s:\mathcal{D}(\R^d)\to L^0(\Omega)$.\\
iii) Given the mild solution from i) we obtain with (\ref{maxfunction}) for $R>0$ that
\begin{align*}
    \int\limits_{B_R(0)} \| E(x,\cdot)\|_{L^1(\R^d)} \lambda^d(dx) <\infty
\end{align*}
and
\begin{align*}
    \int\limits_{B_R(0)} \| E(x,\cdot)\|_{L^2(\R^d)} \lambda^d(dx)<\infty.
\end{align*}
Hence, we obtain the assertion by Theorem \ref{mildGen}.
\end{proof}
\section*{Acknowledgement:}  The first author is financially supported through the DFG-NCN Beethoven Classic 3 project SCHI419/11-1. The two authors would like to thank Alexander Lindner for his support and for many interesting and fruitful discussions. Moreover, the authors would like to thank Ren\'e Schilling for his comments, which helped to improve the paper greatly.

\mbox{}\\
David Berger\\
 TU Dresden, Institute of Mathematical Stochastics, Zellescher Weg 12-14, 01069 Dresden,
Germany\\
email: david.berger2@tu-dresden.de\\\\
Farid Mohamed\\
Ulm University, Institute of Mathematical Finance,  Helmholtzstra{\ss}e 18, 89081 Ulm,
Germany\\
email: farid.mohamed@uni-ulm.de

\end{document}